\def\comment#1{\marginpar{\raggedright\scriptsize{\textcolor{red}{#1}}}}
\newtheorem{theorem}{Theorem}
\newtheorem{assumption}[theorem]{Assumption}
\newtheorem{corollary}[theorem]{Corollary}
\newtheorem{definition}[theorem]{Definition}
\newtheorem{lemma}[theorem]{Lemma}
\newtheorem{proposition}[theorem]{Proposition}
\theoremstyle{remark}
\newtheorem{remark}[theorem]{Remark}
  \newcommand{\F}{\mathcal{F}}
 \newcommand{\M}{\mathsf{M}}
 \newcommand{\PP}{\mathbb{P}}
 \newcommand{\W}{\mathcal{W}}
 \renewcommand{\phi}{\varphi}
\newcommand{\E}{\mathbb{E}}
\newcommand{\C}{\mathcal{C}}
\renewcommand{\P}{\mathbb{P}}
\newcommand{\N}{\mathbb{N}}
\newcommand{\R}{\mathbb{R}}
\newcommand{\weakoptimal}{optimal weak }
\DeclareMathOperator{\supp}{supp}
\newcommand{\bes}{\begin{subequations}}
\newcommand{\ees}{\end{subequations}}
\newcommand{\eea}{\end{eqnarray}}
\renewcommand{\W}{{\mathbb W}}
\newcommand{\FF}{{\mathcal F}}
\renewcommand{\L}{{\mathcal L}}
\renewcommand{\SS}{\mathcal S}
\renewcommand{\epsilon}{\varepsilon}
\DeclareMathOperator{\proj}{proj}
\newcommand{\fourIdx}[5]{%
\setbox1=\hbox{\ensuremath{^{#1}}}%
 \setbox2=\hbox{\ensuremath{_{#2}}}%
 \setbox5=\hbox{\ensuremath{#5}}%
 \hspace{\ifnum\wd1>\wd2\wd1\else\wd2\fi}%
 \ensuremath{\copy5^{\hspace{-\wd1}\hspace{-\wd5}#1\hspace{\wd5}#3}%
 _{\hspace{-\wd2}\hspace{-\wd5}#2\hspace{\wd5}#4}%
 }}
\numberwithin{equation}{section}
\numberwithin{theorem}{section}
\renewcommand{\subset}{\subseteq}
\renewcommand{\S}{S}
\renewcommand{\mathrm}{}
\newcommand{\probref}[1]{{\normalfont (\nameref{#1})}}
\newcommand{\mylabel}[2]{#2\def\@currentlabel{#2}\label{#1}}
\begin{document}

\begin{abstract}
Motivated by applications to geometric inequalities, Gozlan, Roberto, Samson, and Tetali \cite{GoRoSaTe17} introduced a transport problem for `weak' cost functionals. Basic results of optimal transport theory can be extended to this setup in {remarkable} generality. 

In this article we collect several problems from different areas that can be recast in the framework of weak transport theory, namely: the Schr\"odinger problem, the Brenier--Strassen theorem, optimal mechanism design,  linear transfers, semimartingale transport. Our viewpoint yields a unified approach and often allows to strengthen the original results. 
 
\medskip

\noindent\emph{keywords:} Schr\"odinger problem, Brenier--Strassen theorem,  linear transfers, semimartingale transport, optimal mechanism design, weak transport problem, duality, cyclical monotonicity.
\end{abstract}

\title{Applications of weak transport theory}
\author{J. Backhoff-Veraguas \and G. Pammer}
\maketitle

\section{Overview}
The optimal transport problem for weak costs was first introduced by  Gozlan{\color{black}, Roberto, Samson and Tetali} \ \cite{GoRoSaTe17} and has immediately generated interest in  several groups of researchers, see \cite{Sh16, Sh18, FaSh18, GoJu18, AlCoJo17, AlBoCh18} among others. To present the basic problem we introduce some notation. 
 Throughout $X$ and $Y$ denote Polish spaces. Given probability measures $\mu\in\mathcal P(X)$ and $\nu\in\mathcal P(Y)$ we write $\Pi(\mu,\nu)$ for the set of all couplings on $X\times Y$ with marginals $\mu$ and $\nu$.  Given a coupling $\pi$ on $X\times Y$ we denote a regular disintegration with respect to the first marginal by $(\pi_x)_{x\in X}$. 

We consider  cost functionals of the form 
$$C:X\times\mathcal P(Y)\to \mathbb R\cup\{+\infty\},$$
where $C$ is lower bounded and lower semicontinuous, and $C(x, \cdot)$ is assumed  to be convex on $\mathcal P (Y)$ for every $x\in X$. 
The weak transport problem is then to determine
\begin{align}\label{OWT}\tag{OWT}
	V_C(\mu,\nu) = \inf_{\pi \in\Pi(\mu,\nu)} \int_X C(x,\pi_x)\mu(dx).
\end{align}
The classical transport problem is included via $C(x, p) = \int c(x, y)\, d p(y)$ for a given cost function $c:X\times Y \to {\R \cup \{+\infty\}}$. 

{Fundamental} results in classical optimal transport theory include \emph{existence}, \emph{duality}, and \emph{cyclical monotonicity} of optimizers. Through a series of contributions (see \cite{GoRoSaTe17, AlBoCh18, GoJu18, BaBePa18}) it has been understood that these results extend in full generality to the weak transport setup. We will recall these results in some detail in Section 2 below. 

The purpose of the present article is to advertise {the power and flexibility of} \weakoptimal transport theory through the investigation of a broad variety of applications thereof. We will consider the following problems:

\begin{itemize}
\item 
 The \emph{Schr\"odinger problem} has recently received particular attention since it provides a regularized version of the transport problem that is numerically much more tractable than the classical counterpart.  On a technical level, the difference is that quadratic costs are replaced by the entropy wrt  a reference measure. Based on {monotonicity for weak transport costs} we give a short proof of the fundamental characterization of optimizers in the Schr\"odinger problem. (Section \ref{se:Schroedinger}.)
 
 \item The \emph{Brenier--Strassen Theorem} of  Gozlan and Juillet \cite{GoJu18} yields that $1$-Lipschitz maps that are gradients to convex functions are the optimizers of transport problems with  barycentric costs. This result plays a role in the probabilistic proof of the Caffarelli contraction theorem \cite{FaGoPr19}. We provide a short new derivation which is based on {monotonicity for weak transport costs} and emphasizes the similarity with the classical Brenier Theorem. (Section \ref{se:Brenier--Strassen}.)

\item A significant problem in the economics literature is to  \emph{optimize the revenue for a multiple good monopolist}.  The influential article \cite{DaDeTz17} of Daskalakis, Deckelbaum, and Tzamos suggests a systematic \linebreak framework to study such problems and provides a dual characterization. We link the multiple good monopolist problem to \weakoptimal transport and use the weak transport duality theorem to recover and strengthen the results of \cite{DaDeTz17}. (Section \ref{se:Daskalakis}.)

\item Bowles and Ghoussoub \cite{BoGh19} have recently introduced the class of \emph{linear transfers} which includes many specific couplings between  probability measures. A main theorem of \cite{BoGh19} provides the representation of linear transfers through weak transport under the assumption of compactness of the underlying spaces. We provide a short new derivation based on weak transport duality that allows to drop the compactness condition. In particular, this 
implies that the representation result {of Bowles and Ghoussoub} is valid for the important case of Euclidean space. (Section \ref{se:transfers}.)

\item The \emph{semimartingale transportation problem} was introduced by  Tan and Touzi \cite{TaTo13} and extends classical optimal transport to the case where mass  is transported along the trajectories of a semimartingale. As such, it also contains both martingale optimal transport \cite{GaLaTo,PHB,BeJu16}, and the drift control framework of Mikami and Thieullen \cite{MiTh06}, as particular cases. We show that weak transport theory can be used to strengthen the main duality result of \cite{TaTo13}. (Section \ref{se:TaTo}.)

\end{itemize}

The paper is organized as follows. In Section \ref{se:literature} we give a brief overview of previous works connected to \weakoptimal transport. 
In Section \ref{se:framework} we review the basic results of \weakoptimal transport theory together with the necessary notation.  Then  we describe the various applications announced above in Sections {\ref{se:Schroedinger} - \ref{se:TaTo}.}

\section{Literature connected to the weak transport problem}\label{se:literature}
The weak transport problem was introduced by Gozlan, Roberto, Samson and Tetali \cite{GoRoSaTe17}, and shortly afterwards by Aliberti, Bouchitte and Champion \cite{AlBoCh18}. The problem has also been designated ``general transport problem'' and ``non-linear transport problem'' respectively. 

The initial works of Gozlan et al.\ \cite{GoRoSaTe17, GoRoSaSh18} 
are mainly motivated by applications to geometric inequalities. Indeed, particular costs of the form \eqref{OWT} were already considered by Marton \cite{Ma96concentration, Ma96contracting} and Talagrand \cite{Ta95, Ta96}.
The theory for problem {\eqref{OWT}} has been further developed in \cite{AlCoJo17, GoRoSaTe17, GoRoSaSh18, Sh16, Sa17, Sh18, FaSh18, GoJu18,  BaBePa18, BaPa19, BaBePa19}:
Basic results of existence and duality are established in the articles 
\cite{GoRoSaTe17, AlBoCh18, BaBePa18}. The notion of \emph{$C$-monotonicity} was developed  in \cite{BaBeHuKa17, GoJu18, BaBePa18} as an analogue of classical {$c$-cyclical montonicity} in order to provide a characterization of optimizers to the weak transport problem. A weak transport analogue to the case of quadratic costs in classical optimal transport, is the case of barycentric costs. This case has received particular attention, and we refer to \cite{GoRoSaSh18, Sh16, Sa17, Sh18, FaSh18, GoJu18,  BaPa19, BaBePa19, AlCoJo17}.

The weak transport viewpoint is useful for a number of  problems loosely related to stochastic optimization:  it appears in the recursive formulation of the causal transport problem \cite{BaBeLiZa16}, in \cite{AlCoJo17, AlBoCh18, BeJu17, BaBeHuKa17,GuMeNu17,CoVi19} it is used to investigate  martingale optimal transport problems,  {\color{black}in \cite{BaBaBeEd19a} it is applied to prove stability of pricing and hedging in mathematical finance}  and, as mentioned above, it appears in the recent probabilistic proof to the Caffarelli contraction theorem \cite{FaGoPr19}.

\section{{Fundamental} results of weak transport theory}\label{se:framework}
 
For  $t \geq 1$,  $\mathcal P_t(X)$ denotes the set of Borel probability measures with finite $t$-th moment for some fixed  metric $d_X$ (compatible with the topology on $X$), i.e., a Borel probability measure $\mu$ is in $\mathcal P_t(X)$ iff for some $x_0 \in X$ we have
$$\int_X d_X(x,x_0)^t \mu(dx) < \infty.$$
The set of continuous functions on $X$ which are dominated by a multiple of $1 + d_X(x,x_0)^t$, is denoted by $\Phi_t(X)$.
We equip the set of probability measures $\mathcal P_t(X)$ with the $t$-th Wasserstein topology. Specifically, a sequence $(\mu_k)_{k\in\N}$ converges to $\mu \in \mathcal P_t(X)$ if  $\mu_k(f):=\int f d\mu_k$ converges to $\mu(f)$ for all $f\in\Phi_t(X)$. The space $\mathcal P(X)$ itself is equipped with the usual weak topology. {The same conventions apply to $Y$ instead of $X$.}




We have already mentioned the basic assumptions on the function $C$ in the introductory section above. We recall it and make it more precise in the following definition. 
\begin{definition}[\textsc{A}]\label{prop:A}
	We say $C\colon X \times \mathcal P_t(Y) \to \R \cup \{+\infty\}$ satisfies property \probref{prop:A} iff
	\begin{enumerate}[label=\roman*)]
		\item $C$ is lower-semi continuous wrt \ the product topology on $X \times \mathcal P_t(Y)$,
		\item $C$ is bounded from below,
		\item the map $p \mapsto C(x,p)$ is convex, i.e., for all $x\in X$ and $p,q\in\mathcal P_t(Y)$ we have
		$$C(x,\lambda p + (1-\lambda) q) \leq \lambda C(x,p) + (1-\lambda) C(x,q)\quad \lambda \in [0,1].$$
	\end{enumerate}
\end{definition}
%

From now on until the end of this section, the cost function $C$ is assumed to satisfy property \probref{prop:A}.

We will need the following existence and continuity result from \cite{BaBePa18}: 
\begin{theorem}[Existence and {semi}continuity]\label{thm:existence}
	The infimum in \eqref{OWT} is attained and the value $V_C(\mu,\nu)$ depends in a lower semicontinuous way on the marginals $(\mu,\nu) \in \mathcal P(X) \times \mathcal P_t(Y)$.
\end{theorem}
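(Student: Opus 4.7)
The plan is to reduce the problem to a classical Kantorovich-type statement on an enlarged state space, exploiting the convexity of $C(x,\cdot)$. Specifically, introduce the lifted space $X\times \mathcal P_t(Y)$ and the set
\[
\Lambda(\mu,\nu) = \Bigl\{ P\in \mathcal P\bigl(X\times \mathcal P_t(Y)\bigr): \operatorname{proj}_X P = \mu,\ \int p\, P(dx,dp) = \nu \Bigr\},
\]
where the second constraint is the barycentric condition understood in the obvious weak sense. To each $\pi\in\Pi(\mu,\nu)$ associate $P_\pi(dx,dp) := \mu(dx)\,\delta_{\pi_x}(dp)\in \Lambda(\mu,\nu)$, so that $\int C(x,\pi_x)\,\mu(dx) = \int C(x,p)\, P_\pi(dx,dp)$. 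Conversely, given any $P\in\Lambda(\mu,\nu)$ with disintegration $P(dx,dp)=\mu(dx)P_x(dp)$, the barycenter $\pi_x := \int p\, P_x(dp)$ defines a coupling $\pi\in\Pi(\mu,\nu)$, and Jensen's inequality applied to the convex map $p\mapsto C(x,p)$ yields $\int C(x,\pi_x)\mu(dx) \leq \int C\,dP$. Hence
\[
V_C(\mu,\nu) = \inf_{P\in\Lambda(\mu,\nu)} \int C(x,p)\, P(dx,dp),
\]
and an optimiser on either side produces one on the other.

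Next I would establish the two ingredients needed for a standard direct-method argument on the lifted space: compactness of $\Lambda$ and lower semicontinuity of $P\mapsto \int C\,dP$. For the lower semicontinuity in the marginals, fix $(\mu_n,\nu_n) \to (\mu,\nu)$ in $\mathcal P(X)\times \mathcal P_t(Y)$ and pick $\pi_n\in\Pi(\mu_n,\nu_n)$ with $\int C(x,\pi_{n,x})\mu_n(dx) \leq V_C(\mu_n,\nu_n) + 1/n$. Lift to $P_n := P_{\pi_n}\in \Lambda(\mu_n,\nu_n)$. The $X$-marginals $\mu_n$ are tight by Prokhorov, and the $\mathcal P_t(Y)$-marginals of $P_n$ are tight as measures on $\mathcal P_t(Y)$ (Wasserstein topology): indeed, a Markov-type argument shows that a family of probability measures on $\mathcal P_t(Y)$ whose barycenters form a relatively compact set in $\mathcal P_t(Y)$ is itself tight for the Wasserstein topology on $\mathcal P_t(Y)$. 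This is the main technical step and the place where the hypothesis $\nu\in \mathcal P_t(Y)$ really enters: one must control both the tail mass and the uniform integrability of $y\mapsto d_Y(y,y_0)^t$ simultaneously, using that $\{\nu_n\}$ is $t$-Wasserstein precompact.

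After extracting a subsequence, $P_n\to P$ in $\mathcal P(X\times \mathcal P_t(Y))$. Closedness of the two constraints is straightforward: the $X$-marginal is continuous under weak convergence, and the barycentric map $P\mapsto \int p\,P(dx,dp)\in \mathcal P_t(Y)$ is continuous into the Wasserstein topology because tightness is preserved along the sequence. Hence $P\in \Lambda(\mu,\nu)$. Since $C$ is lower-semicontinuous and bounded below, the classical Portmanteau-type result for integrals of lsc functions against weakly convergent probability measures gives
\[
\int C\,dP \leq \liminf_n \int C\,dP_n = \liminf_n \int C(x,\pi_{n,x})\mu_n(dx) \leq \liminf_n V_C(\mu_n,\nu_n).
\]
By the equivalence established in the first paragraph, the left side is $\geq V_C(\mu,\nu)$, proving lower semicontinuity in the marginals.

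Existence is now immediate: apply the same argument to a minimising sequence with the \emph{fixed} marginals $(\mu,\nu)$, obtaining a limit $P^\ast\in\Lambda(\mu,\nu)$ with $\int C\,dP^\ast = V_C(\mu,\nu)$; then its barycentric projection $\pi^\ast\in\Pi(\mu,\nu)$ satisfies $\int C(x,\pi^\ast_x)\mu(dx) \leq \int C\,dP^\ast = V_C(\mu,\nu)$ by Jensen, hence attains the infimum. The genuine difficulty throughout is the tightness statement on the lifted space in the \emph{$t$-Wasserstein} rather than the weak topology on $\mathcal P_t(Y)$; once this is in hand, everything else is a direct-method routine.
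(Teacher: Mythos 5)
Your proof is correct and follows essentially the same route as the proof in \cite{BaBePa18}, which is what the paper cites for this statement: lift couplings to probability measures on $X\times\mathcal P_t(Y)$ via $\pi\mapsto(\operatorname{id},\delta_{\pi_x})_\#\mu$, use Jensen's inequality and convexity of $C(x,\cdot)$ to identify the value with an infimum over the relaxed set, and reduce everything to a Prokhorov/Wasserstein-tightness lemma on the lifted space (the role played there by \cite[Lemma 2.6]{BaBePa18}, with the barycentric projection being the ``intensity'' map of \cite[(2.4)]{BaBePa18}). You correctly isolate the only genuinely nontrivial step — that $\mathcal P_t(Y)$-Wasserstein precompactness of the barycenters $\nu_n$ yields Wasserstein-tightness of the second marginals of $P_{\pi_n}$ via Markov-type estimates controlling both tail mass and uniform $t$-integrability — and the rest is a standard direct-method argument.
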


In optimal transport, the renowned Kantorovich duality states that for lower semi-continuous and lower bounded cost functions $c\colon X\times Y \to \R\cup\{+\infty\}$ we have
\begin{align}\label{eq:classical kantorovich}
	\inf_{\pi \in \Pi(\mu,\nu)} \int_{X\times Y}c(x,y)\,\pi(dx,dy) = \sup_{\substack{f \in L_1(\mu),~g\in L_1(\nu),\\ f + g \leq c}} \mu(f) + \nu(g).
\end{align}
Here duality takes the following form, which resembles (and generalizes) \eqref{eq:classical kantorovich}, cf.  \cite[Theorem 3.1]{BaBePa18}:
\begin{theorem}[Kantorovich duality for weak transport]\label{thm:duality}
	The weak transport problem \eqref{OWT} admits the dual representation
\begin{align}\label{eq:weak kantorovich}
	\inf_{\pi \in \Pi(\mu,\nu)} \int_{X} C(x,\pi_x)\, \mu(dx) = \sup \mu(f) + \nu(g),
\end{align}
where the supremum is taken over functions $f \in L_1(\mu),~g\in{\Phi_t{(Y)}}$, satisfying 
$$f(x) + p(g) \leq C(x, p)$$ 
for $x\in X$, $p \in \mathcal P_t({Y})$. 
\end{theorem}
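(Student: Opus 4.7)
The plan is a standard convex-duality / Fenchel--Moreau argument applied to the dependence on the marginal $\nu$. First I would establish the easy inequality ``$\geq$'': given any admissible pair $(f,g)$ and any $\pi\in\Pi(\mu,\nu)$, disintegrate $\pi$ as $(\pi_x)_{x\in X}$ and integrate the pointwise constraint $f(x)+\pi_x(g)\leq C(x,\pi_x)$ against $\mu(dx)$. Since $\pi$ has second marginal $\nu$, this yields $\mu(f)+\nu(g)\leq \int_X C(x,\pi_x)\,\mu(dx)$; taking supremum on the left and infimum on the right gives weak duality.

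For the nontrivial inequality, fix $\mu$ and set $F\colon\mathcal P_t(Y)\to\R\cup\{+\infty\}$, $F(\nu):=V_C(\mu,\nu)$. I would first check that $F$ is convex and lower semicontinuous with respect to the $t$-Wasserstein topology. Lower semicontinuity is precisely Theorem \ref{thm:existence}. For convexity, given $\nu=\lambda\nu_0+(1-\lambda)\nu_1$ and optimal couplings $\pi^0,\pi^1$ for $V_C(\mu,\nu_0), V_C(\mu,\nu_1)$ (which exist by Theorem \ref{thm:existence}), one verifies that $\pi:=\lambda\pi^0+(1-\lambda)\pi^1\in\Pi(\mu,\nu)$ has the $\mu$-a.s.\ disintegration $\pi_x=\lambda\pi^0_x+(1-\lambda)\pi^1_x$; condition \textit{iii)} in Definition \ref{prop:A} then gives $F(\nu)\leq\lambda F(\nu_0)+(1-\lambda)F(\nu_1)$.

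The core step is then to apply a Fenchel--Moreau biconjugation to $F$ in the dual pairing between $\mathcal P_t(Y)$ and $\Phi_t(Y)$ given by $\langle\nu,g\rangle:=\nu(g)$. The convex conjugate is
\[
F^\ast(g)=\sup_{\nu\in\mathcal P_t(Y)}\bigl[\nu(g)-F(\nu)\bigr]=\sup_\pi\int_X\bigl[\pi_x(g)-C(x,\pi_x)\bigr]\mu(dx),
\]
where the second supremum runs over all probabilities $\pi$ on $X\times Y$ with first marginal $\mu$. A measurable selection argument (using $\varepsilon$-optimal Markov kernels $x\mapsto p_x$) allows one to commute the supremum with the integral, yielding
\[
F^\ast(g)=\int_X \sup_{p\in\mathcal P_t(Y)}\bigl[p(g)-C(x,p)\bigr]\mu(dx)=-\mu(f_g),\qquad f_g(x):=\inf_{p\in\mathcal P_t(Y)}\bigl[C(x,p)-p(g)\bigr].
\]
Biconjugacy now gives $F(\nu)=\sup_{g\in\Phi_t(Y)}\bigl[\nu(g)+\mu(f_g)\bigr]$, and since by construction $f_g(x)+p(g)\leq C(x,p)$ for every $p$, each pair $(f_g,g)$ is admissible in the dual, which matches the claimed representation.

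The main obstacle I anticipate is to make the Fenchel--Moreau step rigorous in the non-locally-compact setting $\mathcal P_t(Y)\subset \Phi_t(Y)^\ast$: one has to extend $F$ by $+\infty$ to a suitable ambient locally convex space in which convexity and $t$-Wasserstein lower semicontinuity of $F$ translate into convexity and lower semicontinuity with respect to the $\sigma(\Phi_t(Y)^\ast,\Phi_t(Y))$-topology, so that biconjugation applies. A second delicate point is the measurable selection interchange, which must be handled even though $C$ may take the value $+\infty$; integrability of $f_g$ for a (nearly) optimal $g$ should then follow a posteriori from the lower boundedness of $C$ and finiteness of the primal value.
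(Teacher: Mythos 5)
Your proposal is correct and follows essentially the same route as the paper (which cites \cite{BaBePa18} for the proof): fix $\mu$, establish convexity and $\mathcal W_t$-lower semicontinuity of $\nu \mapsto V_C(\mu,\nu)$, extend by $+\infty$ to the dual pair $(\mathcal M_t(Y), \Phi_t(Y))$, compute the convex conjugate via a measurable-selection interchange of $\sup$ and $\int$, and conclude by Fenchel--Moreau biconjugation. The paper's remark surrounding \eqref{eq:duality convex conjugate}, which records exactly the closed form of this convex conjugate as a ``further consequence of the proof,'' confirms this is the argument used.
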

We will often use Theorem \ref{thm:duality} in the following (equivalent) form: 
	\begin{align}\label{eq:duality}
		\inf_{\pi \in \Pi(\mu,\nu)} \int_X C(x,\pi_x)\mu(dx) ={ \sup_{ g \in \Phi_{t}(Y)} -\nu(g) + \int_X R_Cg(x) \mu(dx),}
	\end{align}
	where ${ R_C g(x) = \inf_{p \in \mathcal P_t(Y)}  p(g) + C(x,p)}$. Moreover, the right-hand supremum in \eqref{eq:duality} can be restricted to functions in ${g \in \Phi_t(Y)}$ which are bounded from below.
	
	We also recall a further consequence of the proof of Theorem \ref{thm:duality} in {\cite{BaBePa18}} that provides further insight into the dual problem. The  convex conjugate of $\nu \mapsto V_C(\mu,\nu)$ admits a rather concrete representation: For any ${ g\in \Phi_t(Y)}$ we have
	\begin{align}\label{eq:duality convex conjugate}
		{\sup_{\nu \in \mathcal P_t(Y)} \nu(g) - V_C(\mu,\nu) = - \int_X R_C g(x) \mu(dx).}
	\end{align}

\medskip

The notion of $c$-cyclical monotonicity constitutes a necessary (and often also sufficient) optimality criterion for transport plans in classical optimal transport, i.e., for any measurable cost $c \colon X \times Y \to \R \cup \{-\infty, +\infty\}$ if $\pi^* \in \Pi(\mu,\nu)$ is an optimizer of $$V_c(\mu,\nu) = \inf_{\pi\in\Pi(\mu,\nu)} \int_{X\times Y} c(x,y)\pi(dx,dy),$$
where $|V_c(\mu,\nu)| < \infty$, then there exists $\Gamma \subset X \times Y$ with $\pi^*(\Gamma) = 1$ such that for all $N\in\N$ and $(x_k,y_k)_{k=1}^N$ in $\Gamma$ we have for any permutation $\sigma$ of $\{1,\ldots,N\}$ that
$$\sum_{k=1}^N c(x_k,y_k) \leq \sum_{k=1}^N c(x_k,y_{\sigma(k)}).$$
The importance of $c$-cyclical monotonicity has been understood at least since the publication of the seminal article \cite{GaMc96}. See \cite{Vi09, BiCa10, Be15} for minimal conditions that guarantee equivalence of optimality and $c$-cyclical \linebreak monotonicity. More recently, variants of this `monotonicity priniciple' have been applied in transport problems for finitely or infinitely many marginals, the martingale version of the transport problem, the Skorokhod embedding problem, and the distribution constrained optimal stopping problem, see \cite{Pa12fm, CoDeDi15, Gr16a, BeGr14, BeJu16, NuSt16, Za14, BeCoHu14} among others.

In the context of \weakoptimal transport the corresponding concept is $C$-monotonicity. Early versions can be found in \cite{BaBeHuKa17, BaBePa18, GoJu18}, while the following definition as well as the subsequent result are taken from  {\cite[Section 2]{BaPa19}}. 
\begin{definition}\label{def:C-monotonicity}
	A set $\Gamma \subset X \times \mathcal P_t(Y)$ is called $C$-monotone iff for any finite subset of points $(x_k,p_k)_{k=1}^N$ of $\Gamma$ we have
	$$\sum_{k=1}^N C(x,p_k) \leq \sum_{k=1}^N C(x,q_k)\quad \sum_{k=1}^N q_k = \sum_{k=1}^N p_k.$$
	A coupling $\pi$ with first marginal $\mu$ is called $C$-monotone iff there is a $C$-monotone set $\Gamma$ such that $(id_X,\delta_{\pi_x})_\# \mu$ is concentrated on $\Gamma$.
\end{definition}
\begin{theorem}[$C$-monotonicity]\label{thm:C-monotonicity}
	If \eqref{OWT} is finitely valued, then any optimizer is $C$-monotone.
\end{theorem}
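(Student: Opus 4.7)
The plan is to adapt the classical proof of $c$-cyclical monotonicity of optimal transport plans by a swap-and-improve argument. Fix an optimizer $\pi^* \in \Pi(\mu,\nu)$ with $|V_C(\mu,\nu)|<\infty$ and disintegrate $\pi^* = \mu \otimes (\pi^*_x)_{x\in X}$. The goal is to produce a Borel $C$-monotone set $\Gamma \subset X \times \mathcal P_t(Y)$ on which $(id_X,\delta_{\pi^*_x})_{\#} \mu$ is concentrated.

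For each $N\in\N$ I would consider the bad-configuration set
\[
B_N := \Bigl\{(x_k,p_k)_{k=1}^N : \exists\, (q_k)_{k=1}^N \text{ with } \textstyle\sum q_k = \sum p_k \text{ and } \sum C(x_k,q_k) < \sum C(x_k,p_k)\Bigr\},
\]
which is analytic by the lower semicontinuity of $C$. The easy direction is that the projection onto the first two coordinates of the complement of $\bigcup_N B_N$ is automatically $C$-monotone, so the theorem reduces to showing that the iterated pushforward $(x_1,\dots,x_N)\mapsto(x_k,\pi^*_{x_k})_{k=1}^N$ of $\mu^{\otimes N}$ assigns zero mass to $B_N$ for every $N$.

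Suppose for contradiction that this mass is positive for some $N$. Applying the Jankov--von Neumann selection theorem, I would choose a universally measurable selector $(x_1,\dots,x_N)\mapsto (q_k(x_1,\dots,x_N))_{k=1}^N$ witnessing strict improvement and localize to a set $A\subset X^N$ of positive $\mu^{\otimes N}$-mass on which the improvement exceeds a fixed $\delta>0$ while the $q_k$ stay in a uniformly integrable family of $\mathcal P_t(Y)$-valued kernels. A competitor $\tilde\pi\in\Pi(\mu,\nu)$ is then built by averaging: for a small mixing parameter $\alpha>0$, define $\tilde\pi_x$ as a convex combination of $\pi^*_x$ and a kernel obtained by substituting a $q_k$ in place of $\pi^*_{x_k}$ while integrating out the remaining $N-1$ coordinates against $\mu$ on $A$. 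The constraint $\sum_k q_k = \sum_k \pi^*_{x_k}$ ensures that the second marginal is preserved, and convexity of $p \mapsto C(x,p)$ together with the strict improvement on $A$ yields $\int C(x,\tilde\pi_x)\,\mu(dx) < \int C(x,\pi^*_x)\,\mu(dx)$, contradicting optimality.

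The main obstacle is making the swap rigorous while preserving measurability, the probability-kernel property, and a finite $t$-th moment. The mixing parameter $\alpha$ must be taken small enough that $\tilde\pi_x$ remains a probability measure for $\mu$-a.e.\ $x$, and uniform integrability is needed to keep $\tilde\pi$ in the domain of $C$ with finite cost; both issues are handled by a preliminary truncation of $A$ to a compact subset on which the swap kernels are controlled. Once these technicalities are in place, the contradiction follows from an elementary linearity-plus-convexity computation, which yields the desired $C$-monotone carrier $\Gamma$ as the projection of the complement of $\bigcup_N B_N$.
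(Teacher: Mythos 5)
The paper itself does not contain a proof of this theorem---it is cited from \cite[Section~2]{BaPa19}---so the comparison below is against the approach one would expect in that source given the surrounding framework of the present paper.

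Your swap-and-improve construction of a competitor is sound: averaging the localized substitutions, with the marginal balanced by the constraint $\sum_k q_k = \sum_k \pi^*_{x_k}$ and with the cost decrease controlled via the convexity of $p \mapsto C(x,p)$, does yield a strictly better $\tilde\pi \in \Pi(\mu,\nu)$ for small $\alpha$; that part can be made rigorous with the uniform-integrability localization you describe. The genuine gap is the step you describe as ``the easy direction'': passing from ``the pushforward of $\mu^{\otimes N}$ assigns zero mass to $B_N$ for every $N$'' to the existence of a single $C$-monotone set $\Gamma$ carrying $(\id_X,\delta_{\pi^*_x})_\#\mu$. These are not equivalent. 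Knowing that $\rho^{\otimes N}(B_N) = 0$ for $\rho := (\id_X,\delta_{\pi^*_x})_\#\mu$ does not in general produce a set $\Gamma$ with $\rho(\Gamma)=1$ and $\Gamma^N \cap B_N = \emptyset$: for a generic analytic null set this extraction fails (take $\rho$ a symmetric law on $\R$ and the null, symmetric set $\{(x,-x)\}$; any full-measure $\Gamma$ must intersect $-\Gamma$). Nothing in your argument isolates structural features of $B_N$ that would rescue the extraction, and in fact for merely lower semicontinuous $C$ the set $B_N$ has no useful topological regularity.

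The route actually compatible with the paper's toolkit avoids the extraction entirely and goes through the duality it records as Theorem~\ref{thm:duality}. Pick $\epsilon_n$-optimal dual pairs $(f_n,g_n)$ with $\sum_n \sqrt{\epsilon_n} < \infty$ and set $\Phi_n(x,p) := C(x,p) - f_n(x) - p(g_n) \ge 0$. Since $\int \Phi_n(x,\pi^*_x)\,\mu(dx) \le \epsilon_n$, Borel--Cantelli gives a $\mu$-full set $G$ on which $\Phi_n(x,\pi^*_x) \to 0$; taking $\Gamma := \{(x,\pi^*_x): x \in G\}$, the cancellation
\[
\sum_k C(x_k,q_k) \ge \sum_k \bigl(f_n(x_k) + q_k(g_n)\bigr) = \sum_k \bigl(f_n(x_k) + p_k(g_n)\bigr) = \sum_k C(x_k,p_k) - \sum_k \Phi_n(x_k,p_k)
\]
whenever $\sum_k q_k = \sum_k p_k$, followed by $n\to\infty$, shows $\Gamma$ is $C$-monotone. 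This produces the carrier $\Gamma$ directly rather than as an abstract avoidance set, which is precisely the step your proposal leaves unjustified. If you want to keep the purely variational route, you would need to supplement it with a lemma guaranteeing the extraction for the specific sets $B_N$ arising here---that lemma is not elementary, and it is not established (or even stated) in your proposal.
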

	The reverse implication holds also true if $C$ is sufficiently regular, see \cite[Theorem 2.2]{BaPa19}. 
	
\begin{theorem}\label{thm:stability}
Assume that $\mu \in \mathcal P_t(X)$, $\nu \in \mathcal P_t(Y)$.
	If $C$ is continuous and {$|C(x,p)|\leq R\,(d_X(x,x_0)^t + \int_Y d_Y(y,y_0)^t p(dy))$ for some $R$,} then any $C$-monotone coupling $\pi \in \Pi(\mu,\nu)$ is optimal for \eqref{OWT}.
\end{theorem}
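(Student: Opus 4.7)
My plan is to prove Theorem \ref{thm:stability} via the Kantorovich duality \eqref{eq:duality}, by constructing a dual potential $g \in \Phi_t(Y)$ that certifies optimality of the $C$-monotone coupling $\pi$. For any $g \in \Phi_t(Y)$ the definition of $R_C g$ gives $R_C g(x) \le \pi_x(g) + C(x,\pi_x)$, hence
$$-\nu(g) + \int R_C g\, \mu(dx) \;\le\; -\nu(g) + \int \bigl[\pi_x(g) + C(x,\pi_x)\bigr]\, \mu(dx) \;=\; \int C(x,\pi_x)\, \mu(dx),$$
by Fubini together with the marginal identity $\int \pi_x\, \mu(dx) = \nu$. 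Equality holds for a particular $g$ if and only if $\pi_x$ is a minimizer of $q \mapsto q(g) + C(x,q)$ for $\mu$-a.e. $x$, equivalently, $-g$ is a subgradient of the convex map $q \mapsto C(x,q)$ at $q = \pi_x$ for $\mu$-a.e. $x$. Combined with Theorem \ref{thm:duality}, producing such a $g$ proves $V_C(\mu,\nu) \ge \int C(x,\pi_x)\, \mu(dx)$, hence optimality of $\pi$.

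The main task is therefore to construct such a $g$ out of a $C$-monotone set $\Gamma \subset X \times \mathcal P_t(Y)$ on which $(id_X,\delta_{\pi_x})_\#\mu$ is concentrated. The construction is of Rockafellar type adapted to the weak setting: fix a base point $(x_0,p_0) \in \Gamma$ and define $g(y)$ as an infimum over finite chains $(x_k,p_k)_{k=0}^{N} \in \Gamma$ of telescoping quantities built from the differences $C(x_{k-1},p_k) - C(x_{k-1},p_{k-1})$, combined with a final term that tests against $\delta_y$. The $C$-monotonicity of $\Gamma$ is precisely the compatibility condition that makes these chains coherent and forces $-g$ to lie in the subdifferential of $C(x,\cdot)$ at $\pi_x$, in direct analogy with how classical $c$-cyclical monotonicity produces Rockafellar potentials. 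Continuity of $C$ ensures that the resulting $g$ is lower semicontinuous and that the chain infimum is well posed, while the growth bound $|C(x,p)| \le R\bigl(d_X(x,x_0)^t + \int d_Y(y,y_0)^t\, p(dy)\bigr)$ controls each chain increment in terms of the $t$-th moments of the measures involved, yielding $g \in \Phi_t(Y)$ and, together with $\mu \in \mathcal P_t(X)$ and $\nu \in \mathcal P_t(Y)$, the integrability needed for the dual pairing.

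The hardest step will be the Rockafellar-type construction itself. In classical optimal transport one extracts pointwise values of the dual potential directly from the pointwise cost $c(x,y)$, whereas here $C(x,p)$ acts on distributions $p \in \mathcal P_t(Y)$, so one must first build a convex-conjugate-like object on $\mathcal P_t(Y)$ from the chain construction and only then descend to a pointwise function on $Y$ via subgradient selection at $\pi_x$. The continuity and growth hypotheses in Theorem \ref{thm:stability} are used precisely to make this descent well defined and to secure the integrability that closes the duality argument.
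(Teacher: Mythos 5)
Your duality-certificate plan is set up correctly at the start: the inequality $-\nu(g)+\int R_C g\,d\mu\le \int C(x,\pi_x)\,d\mu$ for all $g$, together with \eqref{eq:duality}, reduces optimality of $\pi$ to exhibiting a single $g\in\Phi_t(Y)$ such that $\pi_x$ minimizes $q\mapsto q(g)+C(x,q)$ for $\mu$-a.e.\ $x$, i.e.\ $-g$ is a simultaneous subgradient of $C(x,\cdot)$ at $\pi_x$. This is the right target. Note, though, that the present paper does not prove Theorem~\ref{thm:stability} at all; it is quoted from \cite[Theorem~2.2]{BaPa19}, and the argument there is, to my knowledge, a direct variational comparison (restrict to finitely many conditional measures, use the rearrangement inequality of Definition~\ref{def:C-monotonicity} together with the growth bound, then pass to the limit) rather than a dual-certificate construction. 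So you are proposing a genuinely different, and in fact more ambitious, route.

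The gap in your proposal is the step you yourself flag as the hardest: you never actually produce the $g$. The telescoping-chain object you describe lives naturally on $\mathcal P_t(Y)$, not on $Y$. In classical transport the Rockafellar infimum is over scalars $c(x_i,y_j)$ and immediately yields a function of the endpoint $y$; here each ``increment'' $C(x_{k-1},p_k)-C(x_{k-1},p_{k-1})$ depends on the whole measure $p_k$, so the chain infimum defines a convex functional $\Phi$ on $\mathcal P_t(Y)$, and nothing forces $\Phi$ to be affine, i.e.\ of the form $q\mapsto q(g)$. What $C$-monotonicity (with $N=2$, perturbing $\pi_{x_1},\pi_{x_2}$ by $\pm\Delta$) actually gives is a pointwise compatibility between the subdifferentials $\partial_q C(x_1,\cdot)|_{\pi_{x_1}}$ and $\partial_q C(x_2,\cdot)|_{\pi_{x_2}}$, but these are subsets of $\Phi_t(Y)$ modulo constants, and you need to show that their intersection over $\mu$-a.e.\ $x$ is nonempty and admits a measurable, $\Phi_t$-valued selection. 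That intersection statement is precisely where the difficulty is concentrated (you also have to deal with non-full-support $\pi_x$, where only one-sided perturbations $\Delta$ are admissible, so the first-order condition only gives an inequality on part of $Y$), and your sketch does not address it. Until you fill in how the chain functional collapses to a single $g\in\Phi_t(Y)$ working simultaneously for $\mu$-a.e.\ $x$, the proof is not complete, and that is not a routine detail but the essential content of the theorem.
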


	\section{Structure of optimizers in the Schr\"odinger problem}\label{se:Schroedinger}

We refer the reader to \cite{Le14} for a survey on the Schr\"odinger problem / entropic transport problem (cf.\ \eqref{eq:rel ent problem} below). Starting with the articles \cite{Cu13,BeCaCuPe15,KuEc19}, the Schr\"odinger problem has received significant attention as a regularized, numerically tractable version of the classical transport problem.

	The main goal of this section is to recover, in Corollary \ref{Schroedinger_Char} below, the characterization of entropic cost optimal transport plans through the product structure of their density. 
	Given a Polish space $Z$, the relative entropy of $\mu \in \mathcal P(Z)$ wrt a `reference measure'  $\nu\in \mathcal P(Z)$ is defined as
	$$H(\mu|\nu) = \begin{cases} \int_X \log\left(\frac{d \mu}{d \nu}\right) \mu(dx) & \mu\ll \nu, \\ +\infty & \text{else.} \end{cases}$$	
\begin{theorem}\label{thm:rel ent} 
	Let $\gamma$ be a probability measure equivalent to the product measure $\mu \otimes \nu$ for $\mu \in \mathcal P(X)$ and $\nu \in \mathcal P(Y)$. If the coupling $\pi^*\in\Pi(\mu,\nu)$ is optimal for the problem
	\begin{align}\label{eq:weak rel ent problem}
	\inf_{\pi \in \Pi(\mu,\nu)} \int_X H(\pi_x|\gamma_x) \mu(dx)< +\infty,
	\end{align}
	then there is a measurable set $\Gamma\subset X \times \mathcal P(Y)$ with $\mu(\{x\in X\colon (x,\pi^*_x) \in \Gamma\}) = 1$ such that for all $(x,\pi^*_x),(z,\pi^*_z) \in \Gamma$ there is a constant $\alpha > 0$ so that
	\begin{align}\label{eq_pix_piz}
		\frac{d \pi^*_x}{d \gamma_x} = \alpha \frac{d \pi^*_z}{d \gamma_z}\quad \nu\text{-a.e.}
	\end{align}
\end{theorem}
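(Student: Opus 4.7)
The strategy is to apply Theorem~\ref{thm:C-monotonicity} to the weak cost $C(x,p) := H(p\,|\,\gamma_x)$ and extract a pointwise Euler--Lagrange condition by comparing $\pi^*_x$ with $\pi^*_z$ through two-point mass swaps. The cost is nonnegative and convex in $p$; on a full-measure subset of $X$ where the disintegration $x\mapsto\gamma_x$ is regular, $(x,p)\mapsto H(p\,|\,\gamma_x)$ is jointly lower semicontinuous, so Theorem~\ref{thm:C-monotonicity} applies and produces a $C$-monotone set $\Gamma \subset X\times\mathcal P(Y)$ carrying $(\text{id}_X,\delta_{\pi^*_x})_\#\mu$. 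Finiteness of the cost forces $\pi^*_x\ll\gamma_x$ on $\Gamma$, and since $\gamma\sim\mu\otimes\nu$ we may write $d\pi^*_x = f_x\,d\nu$, $d\gamma_x = k_x\,d\nu$ with $k_x > 0$ $\nu$-a.e., so that $\bar f_x := f_x/k_x = d\pi^*_x/d\gamma_x$.

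Fix $(x,\pi^*_x),(z,\pi^*_z)\in\Gamma$, and let $h\colon Y\to\R$ be bounded and measurable with $\int h\,d\nu = 0$ whose support lies in $\{f_x > \delta\}\cap\{f_z > \delta\}$ for some $\delta > 0$. Then for $|\epsilon|$ small, $q_1 := \pi^*_x + \epsilon h\,d\nu$ and $q_2 := \pi^*_z - \epsilon h\,d\nu$ are probability measures absolutely continuous with respect to $\gamma_x,\gamma_z$, respectively, and $q_1 + q_2 = \pi^*_x + \pi^*_z$. Applying $C$-monotonicity with $N=2$ gives $\Phi(\epsilon) := H(q_1|\gamma_x) + H(q_2|\gamma_z) \geq \Phi(0)$; computing $\partial_\epsilon[u\log(u/k)]_{\epsilon = 0} = h(\log\bar f + 1)$ and using $\int h\,d\nu = 0$ to kill the unit terms yields, after differentiating under the integral sign,
$$\Phi'(0) = \int h\,\bigl[\log\bar f_x - \log\bar f_z\bigr]\,d\nu.$$
Applying the same bound with $-h$ in place of $h$ forces this integral to vanish for every admissible $h$. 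Hence $\log\bar f_x - \log\bar f_z$ is $\nu$-a.e.\ constant on $\{f_x > \delta\}\cap\{f_z > \delta\}$; letting $\delta\downarrow 0$, we obtain some $\alpha > 0$ with $\bar f_x = \alpha\,\bar f_z$ $\nu$-a.e.\ on $\{f_x > 0\}\cap\{f_z > 0\}$.

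To upgrade this identity to all of $Y$, we must show the positivity sets of $f_x$ and $f_z$ agree $\nu$-a.e. Suppose by contradiction $A := \{f_x > 0,\,f_z = 0\}$ satisfies $\nu(A) > 0$ and pick $B\subset\{f_x > 0\}\cap\{f_z > 0\}$ of positive $\nu$-measure; the choice $h := -\nu(B)\,\I_A + \nu(A)\,\I_B$ makes $q_2$ acquire mass $\epsilon\nu(B)\,\I_A\,d\nu$ on $A$, yielding in $\Phi(\epsilon)$ a contribution $\int_A\epsilon\nu(B)\log(\epsilon\nu(B)/k_z)\,d\nu$ whose $\epsilon$-derivative diverges to $-\infty$ as $\epsilon\downarrow 0$, contradicting $\Phi(\epsilon)\geq\Phi(0)$. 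By symmetry $\{f_z > 0,\,f_x = 0\}$ is also $\nu$-negligible, completing the proof. The main obstacles I anticipate are the careful bookkeeping at degenerate supports (admissibility of $q_1,q_2$ and justification of differentiation under the integral), a measurable selection of $\Gamma$ allowing the pointwise conclusion for all pairs simultaneously, and verifying the regularity hypotheses of Theorem~\ref{thm:C-monotonicity} for this specific $C$, which may require restricting to a full-measure subset of $X$.
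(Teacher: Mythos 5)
Your proof is correct and follows the paper's high-level strategy (invoke Theorem~\ref{thm:C-monotonicity}, extract a first-order condition from two-point mass swaps, first match the positivity sets of $\pi^*_x$ and $\pi^*_z$, then deduce the proportionality of the densities), but the crucial final step is carried out by a genuinely different and arguably cleaner argument. Where you test stationarity of $\Phi(\epsilon)=H(\pi^*_x+\epsilon h\,d\nu\,|\,\gamma_x)+H(\pi^*_z-\epsilon h\,d\nu\,|\,\gamma_z)$ against \emph{arbitrary} bounded $h$ with $\int h\,d\nu=0$ supported where both densities are bounded away from zero, thereby obtaining $\int h\,(\log\bar f_x-\log\bar f_z)\,d\nu=0$ for all such $h$ and concluding at once that $\log\bar f_x-\log\bar f_z$ is $\nu$-a.e.\ constant, the paper instead isolates the statement into a separate Lemma~\ref{lem:relent contradiction} whose proof is constructive: it transfers the problem to $[0,1]$ via Borel isomorphism, applies Lusin's theorem to make the ratio $g_1/g_2$ continuous on a large compact set, locates two disjoint level sets of positive measure, builds a measure-preserving bijection $T$ between them, and uses it to hand-craft a one-parameter perturbation $(p_1^t,p_2^t)$ whose derivative at $0$ is strictly negative. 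Your route buys simplicity: no Borel isomorphism, no Lusin, no explicit bijection; the paper's buys a self-contained quantitative statement (strict decrease) that is reusable. One point you should tighten: on $\{f_x>\delta, f_z>\delta\}$ the function $\log\bar f_x-\log\bar f_z$ need not be bounded or even $\nu$-integrable a priori (only $\int f_x\,(\log\bar f_x)^+\,d\nu<\infty$ comes for free from finite entropy), so to make $\Phi'(0)$ literal you should further restrict the support of $h$ to sets of the form $\{f_x>\delta,\,f_z>\delta,\,|\log\bar f_x|\le M,\,|\log\bar f_z|\le M\}$ and send $\delta\downarrow 0$, $M\uparrow\infty$ at the end; this is cosmetic. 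The other caveat you flag — joint lower semicontinuity of $(x,p)\mapsto H(p\,|\,\gamma_x)$ so that Theorem~\ref{thm:C-monotonicity} applies — is a real hypothesis to verify, but the paper glosses over it in exactly the same way, so it is a shared debt rather than a gap in your argument.
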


\begin{proof}
	The minimization problem \eqref{eq:weak rel ent problem} constitutes a weak transport problem. Therefore, there is by Theorem \ref{thm:C-monotonicity} a measurable set $\Gamma \subset X \times \mathcal P(Y)$ such that $\pi^*$ is $C$-monotone on $\Gamma$. Moreover, we may assume w.l.o.g.\ that $\gamma_x \sim \nu$ for all $(x,\pi^*_x) \in \Gamma$ and $H(\pi^*_x|\gamma_x) < \infty$. Fix $(x,\pi^*_x),(z,\pi^*_z) \in \Gamma$. 
	
	We want to show that $\pi^*_{x} \sim \pi^*_{z}$. By $C$-monotonicity, cf.  Definition \ref{def:C-monotonicity}, we have for all $q_x,q_z \in \mathcal P(Y)$ with $q_x + q_z = \pi^*_{x} + \pi^*_{z}$ that
\begin{align*}
	H(\pi^*_{x}|\gamma_{x}) + H(\pi^*_{z}|\gamma_{z}) \leq H(q_x|\gamma_{x}) + H(q_z|\gamma_{z}).
\end{align*}
Then the corresponding first order optimality condition reads as 
\begin{align}\label{eq:rel ent first order}
	\pi^*_{x}\left(\log\left( \frac{d \pi^*_{x}}{d \gamma_{x}} \right)\right) + \pi^*_{z}\left(\log\left( \frac{d \pi^*_{z}}{d \gamma_{z}} \right)\right) \leq q_x\left(\log\left( \frac{d \pi^*_{x}}{d \gamma_{x}} \right)\right) + q_z\left(\log\left( \frac{d \pi^*_{z}}{d \gamma_{z}} \right)\right).
\end{align}
This inequality can be easily deduced by differentiation along the segment joining $\pi^*_{x}$ with $q_x$ and $\pi^*_{z}$ with $q_z$, i.e., by computing
$$\left. \frac{d}{dt}\right|_{t=0} H(q_x^t|\gamma_x) + H(q_z^t|\gamma_z),$$
where $q_\cdot^t = (1-t) \pi^*_{\cdot} + t q_\cdot$.
Assume that there exists a Borel measurable set $A\subset Y$ with $\pi^*_{x}(A) > 0$ but $\pi^*_{z}(A) = 0$. Then on $A$ we have for $\nu$-a.e.\ $y$ that $\frac{d \pi^*_{z}}{d \gamma_{z}}(y) = 0$. It is straightforward that one can find $q_x,q_z$ such that $q_x + q_z = \pi^*_{x} + \pi^*_{z}$, $q_z(A)>0$ and $q_x\left(\log\left( \frac{d \pi^*_{x}}{d \gamma_{x}} \right)\right)<+\infty$, so we omit the technical details. As a consequence we find
\begin{align*}
	q_x\left(\log\left( \frac{d \pi^*_{x}}{d \gamma_{x}} \right)\right) + q_z\left(\log\left( \frac{d \pi^*_{z}}{d \gamma_{z}} \right)\right) = -\infty,
\end{align*}
which contradicts the first order optimality criterion since the left-hand side of \eqref{eq:rel ent first order} is non-negative. Hence by symmetry $\pi^*_{x}$ is equivalent to $\pi^*_{z}$ and as a consequence we have $\pi^*_x \sim \nu$ for all $(x,\pi^*_x)\in\Gamma$.

Applying Lemma~\ref{lem:relent contradiction} below to the pair $\pi^*_{x},\pi^*_{z}$ yields a contradiction to $C$-monotonicity.
\end{proof}

\begin{lemma}\label{lem:relent contradiction}
	Let $p_i$, $\gamma_i$, $i\in \{1,2\}$ and $\nu$ be equivalent probability measures on $Y$. If $H(p_i|\gamma_i),~i=1,2,$ is finite and there exists no constant $\alpha \in \R$ such that
	\begin{align*}
		\frac{d p_1}{d \gamma_1} = \alpha \frac{d p_2}{d \gamma_2} \quad \nu\text{-a.e.},
	\end{align*}
	then there are two probability measures $q_1,q_2 \in \mathcal P(Y)$ with $q_1 + q_2 = p_1 + p_2$ and
	\begin{align*}
		H(p_1|\gamma_1) + H(p_2|\gamma_2) > H(q_1|\gamma_1) + H(q_2|\gamma_2).
	\end{align*}
\end{lemma}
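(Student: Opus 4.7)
The approach is to construct $q_1, q_2$ as a first-order perturbation of $(p_1, p_2)$ and use the variational characterization of relative entropy. Concretely, I would parametrize a curve $q_1^t = p_1 + th\nu$, $q_2^t = p_2 - th\nu$, where $h$ is a bounded $\nu$-integrable function with $\int h\,d\nu = 0$. Automatically $q_1^t + q_2^t = p_1 + p_2$, so it suffices to find $h$ (with suitable support) making $F(t) := H(q_1^t|\gamma_1) + H(q_2^t|\gamma_2)$ strictly decrease at $t=0$.

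Writing $\phi_i = dp_i/d\nu$, $\psi_i = d\gamma_i/d\nu$ and $f_i = \phi_i/\psi_i = dp_i/d\gamma_i$, a formal differentiation under the integral sign gives
\begin{align*}
F'(0) = \int h\,[\log f_1 - \log f_2]\,d\nu,
\end{align*}
where the ``$+1$'' terms cancel because $\int h\,d\nu = 0$. The key observation is that by hypothesis $\varphi := \log f_1 - \log f_2$ is \emph{not} $\nu$-a.e.\ constant. Hence one can pick a threshold $c$ such that both $\{\varphi > c\}$ and $\{\varphi < c\}$ have positive $\nu$-measure, and then choose $h$ as a suitably normalized signed indicator $a\mathbf{1}_B - b\mathbf{1}_A$ with $A\subset\{\varphi>c\}$, $B\subset\{\varphi<c\}$, yielding $\int h\,d\nu = 0$ and $\int h\varphi\,d\nu < 0$ by the mean-value separation.

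The main obstacle is making sure the differentiation is rigorous and that $q_i^t$ stay non-negative on a nontrivial interval of $t$. To handle this I would restrict the support of $h$ to a set $K_n = \{1/n \leq \phi_i, \psi_i \leq n,\ i=1,2\}$; since all measures are equivalent, $\nu(K_n)\to 1$, and an argument using $K_n\subset K_{n+1}$ shows $\varphi$ remains non-constant on $K_n$ for $n$ large, so the preceding construction of $h$ can be carried out inside $K_n$. On $K_n$ the integrands $(\phi_i \pm th)\log((\phi_i\pm th)/\psi_i)$ together with their $t$-derivatives are uniformly bounded, and outside $K_n$ the relevant part of $F(t)$ is independent of $t$ (and finite since $H(p_i|\gamma_i) < \infty$). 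This justifies differentiation under the integral and yields a smooth $F$ near $0$.

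Putting these pieces together, $F'(0) = \int h\varphi\,d\nu < 0$, so for sufficiently small $t > 0$ the measures $q_1 := q_1^t$ and $q_2 := q_2^t$ are genuine probability measures on $Y$ satisfying $q_1 + q_2 = p_1 + p_2$ and $H(q_1|\gamma_1) + H(q_2|\gamma_2) = F(t) < F(0) = H(p_1|\gamma_1) + H(p_2|\gamma_2)$, which is the required strict inequality.
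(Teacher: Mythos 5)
Your argument is correct, and it follows the same overall strategy as the paper — perturb $(p_1,p_2)$ linearly along a direction that preserves the sum and compute the sign of the first derivative of the total relative entropy — but the construction of the perturbation is genuinely different and somewhat simpler. The paper (after reducing to $Y=[0,1]$, $\nu$ uniform) invokes Lusin's theorem to find a compact set on which $g_1/g_2=dp_1/d\gamma_1\,/\,dp_2/d\gamma_2$ is continuous, locates two disjoint preimage sets $A_1,A_2$ of separated intervals, builds a measure-preserving bijection $T\colon A_1\to A_2$ from inverse CDFs, and then cooks up a carefully truncated density $S$ supported on $A_1\cup A_2$ so that $p_i^t$ stay non-negative for $|t|\leq 1$; the final strict inequality is extracted via strict convexity of $t\mapsto H(p_1^t|\gamma_1)+H(p_2^t|\gamma_2)$. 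You instead take $h$ to be a balanced combination of indicators $a\mathbf 1_B - b\mathbf 1_A$ inside a truncation set $K_n$ where all the densities $\phi_i,\psi_i$ are sandwiched between $1/n$ and $n$, with $A,B$ determined by a threshold $c$ separating the essential range of $\varphi=\log f_1-\log f_2$; the constraint $\int h\,d\nu=0$ makes $\int h\varphi\,d\nu<0$ automatic since $\varphi>c$ on $A$ and $\varphi<c$ on $B$, and the truncation both guarantees non-negativity of $q_i^t$ for small $t$ and legitimizes the differentiation. This sidesteps Lusin's theorem, the measure-preserving map, and the appeal to strict convexity (once $F$ is shown differentiable at $0$, $F'(0)<0$ already gives a strictly better $t$), at the modest cost of the routine verification that $\varphi$ stays non-constant on $K_n$ for $n$ large. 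Both routes are sound; yours trades measure-theoretic surgery for a cleaner linear-algebraic choice of perturbation and is arguably easier to make fully rigorous.
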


\begin{proof}
	Since any Polish space is Borel-isomorphic to a measurable subset of $[0,1]$ we may assume that $Y = [0,1]$. By the inverse transform sampling we may assume that $\nu$ is the uniform distribution on $[0,1]$. For $i \in \{1,2\}$, define the densities
	$$g_i\colon [0,1] \to (0,+\infty),\quad g_ i = \frac{d p_i}{d \gamma_i};\quad f_i\colon [0,1] \to (0,+\infty),\quad f_i = \frac{d \gamma_i}{d \nu}.$$	
	By Lusin's theorem we find for any $\epsilon > 0$ a compact set $K_\epsilon\subset [0,1]$ with mass $\nu(K_\epsilon) \geq 1-\epsilon$ such that $h := \frac{g_1}{g_2}\colon [0,1] \to (0,+\infty)$ is continuous on $K_\epsilon$. By the assumption that $g_1$ is not a multiple of $g_2$ we find two disjoint intervals $[a_1,b_1],[a_2,b_2]\subset (0,+\infty)$, $b_1 < a_2$ with
	$$\nu(h^{-1}([a_1,b_1])) > 0 \text{ and } \nu(h^{-1}([a_2,b_2]))>0.$$
	Choosing $\epsilon > 0$ sufficiently small the closed, disjoint sets
	$$A_1 = h^{-1}([a_1,b_1]) \cap K_\epsilon \text{ and }A_2 = h^{-1}([a_2,b_2])\cap K_\epsilon$$
	have positive mass under $\nu$. W.l.o.g. we can assume that $\nu(A_1) = \nu(A_2) > 0$. The map
	\begin{align*}T&\colon A_1  \to A_2\\ &\hphantom{\colon} x \mapsto F_{\nu|_{A_2}}^{-1} \circ F_{\nu|_{A_1}}(x)
	\end{align*}
	provides a measure preserving bijection. Let $S\colon [0,1] \to [0,+\infty)$ be defined as
\begin{align*} 
	S(y) = \begin{cases} \min\Big\{g_1(y)f_1(y),\,g_2(y)f_2(y),\,\hat g_1(y)
\hat f_1(y),\,\hat g_2(y)\hat f_2(y)\Big \} & y \in A_1,\\
-\min\Big\{g_1(z)f_1(z),\,g_2(z)f_2(z),\,g_1(y)
f_1(y),\,g_2(y)f_2(y)\Big\} & y \in A_2,\\
0&\text{else,}
\end{cases}
	\end{align*}
	where $\hat g_i = g_i \circ T$, $\hat f_i = f_i \circ T$ and $z = T^{-1}(y)$.
Then we define for $t \in [-1,1]$ the probability measures $p_1^t$ and $p_2^t$ on $[0,1]$ by
\begin{align*}
	p_1^t(dy) = p_1(dy) + tS(y)\nu(dy),\quad p_2^t(dy) = p_2(dy) - tS(y) \nu(dy).
\end{align*}
The respective densities are then given by 
\begin{align*}
	\frac{d p_1^t}{d \gamma_1} = g_1 + t \frac{S}{f_1},\quad \frac{d p_2^t}{d \gamma_2} = g_2 - t \frac{S}{f_2}.
\end{align*}
Differentiation of
\begin{align*}
	& H(p_1^t|\gamma_1) + H(p_2^t|\gamma_2) \\= & \int_Y \log\left(g_1(y) + t \frac{S}{f_1}(y)\right) p_1^t(dy) + \int_Y \log\left(g_2(y) + t \frac{S}{f_2}(y)\right) p_2^t(dy)
\end{align*}
with respect to $t$ and evaluation at 0 yields
\begin{align*}
	\int_Y  S(y) \log\left( \frac{g_1}{g_2}(y) \right) \nu(dy) = \int_{A_1} S(y) \log\left(\frac{g_1(y) g_2(T(y))}{g_2(y) g_1(T(y))} \right)\nu(dy) < 0.
\end{align*}
By strict convexity of $t\mapsto H(p_1^t|\gamma_1) + H(p_2^t|\gamma_2)$ we conclude that there exists a $t_0 \in [-1,1]$ with
\begin{align*}
	H(p_1|\gamma_1) + H(p_2|\gamma_2) > H(p_1^{t_0}|\gamma_1) + H(p_2^{t_0}|\gamma_2).
\end{align*}

\end{proof}

Finally we obtain the main result of this section (see  \cite[Corollary 3.2]{Cs75}).
\begin{corollary}\label{Schroedinger_Char} Let $\mu \in \mathcal P(X)$ and $\nu \in \mathcal P(Y)$ and 
 $\gamma\in \mathcal P(X\times Y)$ a probability measure equivalent to $\mu\otimes\nu$. Assume that the  value of the corresponding entropic optimal transport problem is finite, i.e.,
	\begin{align}\label{eq:rel ent problem}
		\inf_{\pi \in \Pi(\mu,\nu)} H(\pi|\gamma) < \infty.
	\end{align}		
	A coupling $\pi^* \in \Pi(\mu,\nu)$ minimizes \eqref{eq:rel ent problem} if and only {\color{black}$H(\pi^*|\gamma)<\infty$ and} there exist measurable functions $f$ and $g$ 
	such that
	\begin{align*}
		\frac{d \pi^*}{d \gamma}(x,y) = f(x) g(y)\quad \gamma\text{-a.e.}
	\end{align*}
\end{corollary}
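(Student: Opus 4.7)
The strategy is to reduce Corollary \ref{Schroedinger_Char} to Theorem \ref{thm:rel ent} via the disintegration of relative entropy. For any $\pi\in\Pi(\mu,\nu)$ with $\pi\ll\gamma$,
\begin{align*}
H(\pi|\gamma) \;=\; H(\mu|\gamma_X) \;+\; \int_X H(\pi_x|\gamma_x)\,\mu(dx),
\end{align*}
where $\gamma_X$ is the first marginal of $\gamma$. Since $\gamma\sim\mu\otimes\nu$ forces $\gamma_X\sim\mu$, and finiteness of the infimum in \eqref{eq:rel ent problem} forces $H(\mu|\gamma_X)<\infty$, the first term on the right-hand side is a finite constant independent of $\pi$. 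Consequently \eqref{eq:rel ent problem} and the weak transport problem \eqref{eq:weak rel ent problem} share the same set of optimizers, and Theorem \ref{thm:rel ent} applies to $\pi^*$.

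For the forward implication, let $\Gamma\subset X\times\mathcal P(Y)$ be the set supplied by Theorem \ref{thm:rel ent}: $(x,\pi^*_x)\in\Gamma$ for $\mu$-a.e.\ $x$, and on $\Gamma$ the densities $d\pi^*_x/d\gamma_x$ are pairwise $\nu$-a.e.\ proportional. I would fix a single reference $z_0$ with $(z_0,\pi^*_{z_0})\in\Gamma$ and set
\begin{align*}
g(y) := \tfrac{d\pi^*_{z_0}}{d\gamma_{z_0}}(y), \quad \alpha(x) := \Big(\int_Y g(y)\,\gamma_x(dy)\Big)^{-1}, \quad f(x) := \tfrac{d\mu}{d\gamma_X}(x)\,\alpha(x),
\end{align*}
where $\alpha(x)$ is the unique scalar making $\alpha(x)g$ a probability density with respect to $\gamma_x$; this is well-defined and measurable for $\mu$-a.e.\ $x$. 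Combining with the standard identity $d\pi^*/d\gamma(x,y)=(d\mu/d\gamma_X)(x)\,(d\pi^*_x/d\gamma_x)(y)$ yields $d\pi^*/d\gamma(x,y)=f(x)g(y)$ $\gamma$-a.e.

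The converse is a classical computation: suppose $d\pi^*/d\gamma=f(x)g(y)$ and $H(\pi^*|\gamma)<\infty$, and let $\pi\in\Pi(\mu,\nu)$. The chain rule for Radon--Nikodym derivatives gives
\begin{align*}
H(\pi|\gamma) \;=\; H(\pi|\pi^*) \;+\; \int_{X\times Y}\big(\log f(x)+\log g(y)\big)\,\pi(dx,dy),
\end{align*}
and by the marginal constraints the second summand equals $\int\log f\,d\mu+\int\log g\,d\nu$, which by the same identity applied with $\pi^*$ in place of $\pi$ coincides with $H(\pi^*|\gamma)$. Hence $H(\pi|\gamma)=H(\pi|\pi^*)+H(\pi^*|\gamma)\geq H(\pi^*|\gamma)$, as desired.

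The main subtleties I anticipate are a measurable selection for the reference $z_0$ (together with care for $\mu$-null exceptional sets in the definition of $f$), and the separate integrability of $\log f$ and $\log g$ needed to justify the marginal identity in the converse; the former is handled by standard disintegration arguments, the latter by positive/negative-part decompositions exploiting $H(\pi^*|\gamma)<\infty$.
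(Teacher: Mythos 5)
Your forward implication is essentially identical to the paper's: disintegrate $H(\pi|\gamma)=\int_X H(\pi_x|\gamma_x)\,\mu(dx)+H(\mu|\gamma_X)$, observe the additive constant is irrelevant, invoke Theorem~\ref{thm:rel ent}, and glue the fibrewise proportionality constant $\alpha(x)$ together with $d\mu/d\gamma_X$ to build $f$. That part is fine.

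The converse is a genuinely different route and, as written, has a gap. The paper does not attempt the Pythagorean identity $H(\pi|\gamma)=H(\pi|\pi^*)+\int\log(fg)\,d\pi$ directly. Instead it proves a uniqueness statement: any two couplings in $\Pi(\mu,\nu)$ with finite entropy and product-form density relative to $\gamma$ must coincide, and then appeals to the existence of a minimizer (lower semicontinuity plus strict convexity) which, by the forward direction, has product form. The key technical device there is an orthogonality relation $\int h\,(fg-f'g')\,d\gamma=0$ for $h\in L^1(\mu)\oplus L^1(\nu)$, combined with a monotone approximation of $\log(fg)$ (not of $\log f$ and $\log g$ separately) by elements of $L^1(\mu)\oplus L^1(\nu)$, from which $\int\log(fg)\,d\pi=\int\log(fg)\,d\pi'$ follows without ever asserting separate integrability. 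Your argument, by contrast, hinges on the step where $\int\bigl(\log f(x)+\log g(y)\bigr)\,\pi(dx,dy)$ is split as $\int\log f\,d\mu+\int\log g\,d\nu$. This requires $\log f\in L^1(\mu)$ and $\log g\in L^1(\nu)$ separately, which does not follow from $H(\pi^*|\gamma)<\infty$; only the joint integrability $\int|\log(fg)|\,d\pi^*<\infty$ is automatic. This is a well-known obstruction in the Schr\"odinger problem (the product decomposition of the Schr\"odinger potentials need not be individually integrable), and the phrase ``positive/negative-part decompositions exploiting $H(\pi^*|\gamma)<\infty$'' does not resolve it. There is a secondary concern of the same flavour: the identity $H(\pi|\gamma)=H(\pi|\pi^*)+\int\log(fg)\,d\pi$ can be an $\infty-\infty$ expression when $H(\pi|\pi^*)=\infty$, so some care is needed before rearranging. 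The paper's orthogonality--approximation trick is precisely the device that circumvents both issues, and you would need to import something like it to close your argument.
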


\begin{proof}
	We use the notation of Theorem~\ref{thm:rel ent} and rewrite
	\begin{align}\label{eq:rewrite rel ent}
		H(\pi|\gamma) = \int_X H(\pi_x|\gamma_x) \mu(dx) + H(\mu|\gamma_0),
	\end{align}
	where $\gamma_0 \in \mathcal P(X)$ is the $X$-marginal of $\gamma$. Lower semicontinuity and strict convexity of the relative entropy yield the existence of a minimizer $\pi^* \in \Pi(\mu,\nu)$ of \eqref{eq:rel ent problem}. The identity \eqref{eq:rewrite rel ent} shows that $\pi^*$ also minimizes \eqref{eq:weak rel ent problem}. By Theorem~\ref{thm:rel ent}, {calling $\Gamma$ the set with $\mu$-full $X$-projection therein}, we fix $(z,\pi^*_z)\in \Gamma$ and define $g(y):= \frac{d \pi^*_z}{d \gamma_z}(y)$ as well as $h(x)=\alpha$  for $\alpha$ as in \eqref{eq_pix_piz}. Thus we have
	\begin{align*}
		\frac{d \pi^*_x}{d \gamma_x}(y) = h(x) g(y)\quad (x,\pi^*_x)\in\Gamma,\, \gamma_x\text{-a.e. }y.
	\end{align*}
	Hence,
	\begin{align*}
		\frac{d \pi^*}{d \gamma}(x,y) = \frac{d \mu}{d \gamma_0}(x) h(x) g(y) = f(x) g(y)\quad \gamma\text{-a.e.},
	\end{align*}
	where $f$ is defined appropriately. Conversely, 
	{\color{black}assume that we have two couplings $\pi$, $\pi'$ {\color{black}with finite entropy,} with marginals $\mu$ and $\nu$, and such that their densities w.r.t.\ $\gamma$ are of product form. Let
	$$\frac{d\pi}{d\gamma}(x,y) = f(x) g(y)\text{ and }\frac{d\pi'}{d\gamma}(x,y) = f'(x)g'(y)$$
	and $\log(fg)fg, \log(f'g')f'g' \in L^1(\gamma)$. Since the marginals of $\pi$ and $\pi'$ coincide, we have for any $h \in L^1(\mu) \oplus L^1(\nu)$ that
	\begin{align}\label{eq:ortho}
		\int h(x,y) (fg - f'g') \gamma(dx,dy) = 0.
	\end{align}
	We can approximate $\log(fg)$ by elements in $L^1(\mu) \oplus L^1(\nu)$ such that on $[\log(fg) \geq 0]$ we have $h_n \geq 0$ and $h_n \nearrow \log(fg)$ and similar applies on $[\log(fg) \leq 0]$. Hence
	$$\int \log(fg)d\pi= \lim_n \int h_n fgd\gamma = \lim_n \int h_n f'g' d\gamma = \int \log(fg)d\pi'.$$
	Therefore, $\log(fg)f'g' \in L^1(\gamma)$ and $\log(f'g')fg \in L^1(\gamma)$ and
	\begin{align*}
		H(\pi|\gamma) - H(\pi|\pi') = \int \log(f'g') d\pi = \int \log(f'g') d\pi' = H(\pi'|\gamma).
	\end{align*}
	Especially, we have shown $H(\pi|\gamma) = H(\pi'|\gamma)$ since $H(\pi|\pi')$ is non-negative, which implies $H(\pi|\pi')$ and $H(\pi'|\pi)$ vanish, so $\pi = \pi'$.
	}
\end{proof}

\section{Brenier-Strassen Theorem}\label{se:Brenier--Strassen}

A fundamental result in the theory of optimal transport is  Brenier's theorem \cite{Br87, Br91} which asserts that the optimizer of the Wasserstein-2 distance on $\R^d$ between $\mu, \nu$ is given by the gradient of a convex function $\phi$. Specifically,  the optimal plan $\pi^* \in \Pi(\mu,\nu)$ is of the form $(id,\nabla \phi)(\mu)$ and $\nabla\phi(\mu) = \nu$. We refer to, e.g.\ \cite[Theorem 2.12]{Vi03} for more details and bibliographical remarks.

Strassen's theorem \cite{St65} asserts that given marginals $\mu, \nu\in \mathcal P_1(\R^d)$ there exists a martingale $(Z_i)_{i=1,2}$ with $Z_1\sim \mu, Z_2 \sim \nu$ provided that $\mu\leq_c \nu$. Here $\leq_c$ denotes the usual convex order, i.e.\ $\mu\leq_c \nu$ means that $\int\phi\, d\mu \leq_c \int \psi\, d\nu$ for all convex functions $\phi$. Note that the condition $\mu\leq_c\nu$ is not only sufficient for the existence of a martingale with these marginals but in fact also necessary by Jensen's inequality. 

An intermediate version between the classical $\mathcal W_2$-problem and Strassen's Theorem on the existence of martingales \cite{St65} was recently investigated in various forms \cite{GoJu18, GoRoSaTe17,GoRoSaSh18,AlCoJo17,Sh18,BaBePa18,BaBePa19}.
The following represents the weak transport analogue of the classical $\mathcal W_2$-distance:
		\begin{align}\label{eq:Brenier-Strassen} 
			V_2(\mu,\nu)^2 & := \inf_{\eta \leq_c \nu} \mathcal W_2(\mu,\eta)^2 \\ 
			&= \inf_{\pi\in \Pi(\mu,\nu)} \int_{\R^d} \left|x - \int_{\R^d} y \pi_x(dy)\right|^2 \mu(dx).
			\label{eq:Brenier-Strassen2} 
\end{align} 
Note that the equality of 
\eqref{eq:Brenier-Strassen} and \eqref{eq:Brenier-Strassen2} is a straightforward consequence of Strassen's Theorem. 	
		
		The Brenier-Strassen theorem of Gozlan and Juillet \cite{GoJu18} (see also Shu \cite{Sh16} for the one-dimensional case) asserts that $\eta^* \leq_c \nu$ is optimal for \eqref{eq:Brenier-Strassen} if and only if there exists a convex function $\phi$ with 1-Lipschitz gradient such that $\nabla \phi(\mu) = \eta^*$. 			The proof of the \emph{if}-part of the statement can be done by showing dual attainment or more directly using $C$-monotonicity whereas the \emph{only if}-part was thus far only shown via duality. 
		
		The goal of this section is to recover the \emph{only if}-clause by means of sufficiency of $C$-monotonicity. We believe that this new proof is appealing in that it mimics the proof of Brenier's theorem via classical cyclical monotonicity, underlining  the similarity of the two results.  

				To provide  an intuition for the proof, we recall the main idea in the classical case:	
		Let $\phi\colon\R^d \to \R$ be a convex and differentiable function. For any finite number of points $x_1,\ldots,x_n \in\R^d$ it is immediate that
		\begin{align}\label{eq:cycl mon}
			\sum_{k=1}^n \langle x_{i+1} - x_i, \nabla \phi(x_i) \rangle \leq 0.
		\end{align}
		Completing the square yields
		\begin{align}\label{eq:compl square}
			\sum_{k=1}^n |x_i-\nabla \phi(x_i)|^2 \leq \sum_{k=1}^n |x_{i+1} - \nabla \phi(x_i)|^2,
		\end{align}
		which shows optimality for any measure supported on finitely many points in the graph of $\nabla \phi$. Therefore, since cyclical monotonicity is sufficient for  optimality,  
		the coupling $(id,\nabla \phi)(\mu)$ turns out to be optimal for the quadratic distance transport problem. 
		
		In the subsequent proof, we will use as a black box a characterization of convex functions with 1-Lipschitz gradient in the spirit of cyclical monotonicity, see \cite{Zh18}.  This permits to draw the desired  conclusion for the problem \eqref{eq:Brenier-Strassen} in analogy to step from  \eqref{eq:cycl mon} to \eqref{eq:compl square}.

	\begin{theorem}\label{Brenier-Strassen}
		Let $\mu, \nu \in \mathcal P_2(\R^d)$ where $\R^d$ is equipped with the standard euclidean norm $|\cdot|$. Let $\eta^*$ be in convex order dominated by $\nu$. Then the following are equivalent:
		\begin{enumerate}
			\item \label{it:bs1} The values of $\mathcal W_2(\mu,\eta^*)$ and $V_2(\mu,\nu)$ coincide, i.e.\ $\eta^*$ is a solution of the optimization problem \eqref{eq:Brenier-Strassen}
			\item \label{it:bs2} There is a  convex function $\phi\colon \R^d\to\R$ such that $\nabla \phi\colon\R^d\to\R^d$ is 1-Lipschitz, $\eta^* = \nabla\phi(\mu)$ and $(id,\nabla\phi)_{\#}\mu$ is the unique optimizer of \eqref{eq:Brenier-Strassen}.
		\end{enumerate}
	\end{theorem}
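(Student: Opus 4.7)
The plan is to establish both directions by bringing the $C$-monotonicity theory for the barycentric cost
\[C(x,p) = \Big\lvert x - \int_{\R^d} y\,p(dy) \Big\rvert^2\]
into contact with the cyclical-monotonicity-style characterization of convex functions with $1$-Lipschitz gradient from \cite{Zh18}, treated as a black box. Informally this characterization asserts that a map $T\colon\R^d\to\R^d$ agrees with the gradient of such a function on its domain if and only if it satisfies a cyclical inequality that, after completing the square, matches the $C$-monotonicity inequality for $C$. In operational terms it suffices to check that $T$ is both classically cyclically monotone and firmly non-expansive, i.e.\ $\langle x_1-x_2,\, T(x_1)-T(x_2)\rangle \geq \lvert T(x_1)-T(x_2)\rvert^2$ for all $x_1,x_2$ in its domain.

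For direction $(\ref{it:bs2})\Rightarrow(\ref{it:bs1})$: assume $\eta^* = \nabla\phi(\mu)$ for $\phi$ convex with $1$-Lipschitz gradient. Since $\eta^*\leq_c \nu$, Strassen's theorem furnishes a martingale kernel from $\eta^*$ to $\nu$; composing $(\operatorname{id},\nabla\phi)_\#\mu$ with this kernel yields $\pi\in\Pi(\mu,\nu)$ whose barycentric projection is $\nabla\phi$. To verify $C$-monotonicity of $\pi$, take any $(x_k)_{k=1}^N$ and $(q_k)_{k=1}^N$ with $\sum_k q_k = \sum_k \pi_{x_k}$; the barycenters $y_k := \int y\,q_k(dy)$ automatically satisfy $\sum_k y_k = \sum_k \nabla\phi(x_k)$, and the cyclical inequality of \cite{Zh18} delivers
\[\sum_{k=1}^N C(x_k,\pi_{x_k}) = \sum_{k=1}^N |x_k - \nabla\phi(x_k)|^2 \leq \sum_{k=1}^N |x_k - y_k|^2 = \sum_{k=1}^N C(x_k,q_k).\]
Theorem \ref{thm:stability} then certifies $\pi$ as optimal in \eqref{eq:Brenier-Strassen2}, with value $\mathcal W_2(\mu,\eta^*)^2$ by Brenier's theorem applied between $\mu$ and $\eta^*$, so $\eta^*$ solves \eqref{eq:Brenier-Strassen}. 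This mirrors the completing-the-square step \eqref{eq:cycl mon}$\to$\eqref{eq:compl square} of the classical Brenier proof.

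For direction $(\ref{it:bs1})\Rightarrow(\ref{it:bs2})$: assume $\eta^*$ solves \eqref{eq:Brenier-Strassen}. Using the equivalence of \eqref{eq:Brenier-Strassen} and \eqref{eq:Brenier-Strassen2} together with Theorem \ref{thm:existence}, pick an optimal $\pi^*\in\Pi(\mu,\nu)$ whose barycentric projection $b(x) := \int y\,\pi^*_x(dy)$ satisfies $b(\mu) = \eta^*$. Theorem \ref{thm:C-monotonicity} supplies a $C$-monotone Borel set $\Gamma$ with $(x,\pi^*_x)\in\Gamma$ for $\mu$-a.e.\ $x$. Two well-chosen competitors in the $C$-monotonicity inequality extract the structure of $b$. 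Testing with the permuted family $(q_k)=(p_{\sigma(k)})$ and cancelling the common $\sum_k |b(x_k)|^2$ terms yields classical $c$-cyclical monotonicity of $\{(x_k,b(x_k))\}$, so $b=\partial\phi$ for some convex $\phi$ by Rockafellar. Testing, for two points $(x_1,p_1),(x_2,p_2)\in\Gamma$, with the mass-preserving mixture $(q_1,q_2) = (\alpha p_1 + (1-\alpha)p_2,\, (1-\alpha)p_1 + \alpha p_2)$, expanding the resulting inequality as a quadratic in $\alpha$ and letting $\alpha\to 1^-$, produces the firmly non-expansive condition
\[\langle x_1 - x_2,\, b(x_1) - b(x_2)\rangle \geq |b(x_1) - b(x_2)|^2.\]
Invoking \cite{Zh18} then yields a convex $\phi\colon\R^d\to\R$ with $1$-Lipschitz gradient such that $b = \nabla\phi$ on $\operatorname{supp}\mu$, whence $\eta^* = b(\mu) = \nabla\phi(\mu)$. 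Uniqueness of $(\operatorname{id},\nabla\phi)_\#\mu$ as optimizer follows since $\nabla\phi$ is pinned down on $\operatorname{supp}\mu$ by the above, and the Brenier coupling between $\mu$ and $\eta^*$ is the unique $\mathcal W_2$-optimal one.

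The main obstacle lies in invoking the \cite{Zh18} black box to pass from pointwise information on $\operatorname{supp}\mu$ (classical cyclical monotonicity and firmly non-expansiveness of $b$) to a \emph{global} convex function $\phi$ on $\R^d$ with $1$-Lipschitz gradient; this is precisely the step where Zhang's cyclical inequality matches the $C$-monotonicity inequality and tightens the analogy with the classical Brenier argument.
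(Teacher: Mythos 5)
Your proposal diverges from the paper in two significant ways, and one of them contains a genuine gap.

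\textbf{Direction $\ref{it:bs2}\Rightarrow\ref{it:bs1}$.} The paper verifies $C$-monotonicity of the set $\Gamma=\{(x,\delta_{\nabla\phi(x)})\}$ of \emph{Diracs}, so that any competitor family $(q_k)$ with $\sum q_k=\sum\delta_{\nabla\phi(x_k)}$ is forced to be supported on $\{\nabla\phi(x_j)\}_j$; its barycenters are therefore bistochastic convex combinations of the $\nabla\phi(x_j)$'s, and optimality of the identity permutation is then a constrained convex minimization over the Birkhoff polytope, settled via the convexity of $F$ and \cite[Lemma 4]{Zh18}. You instead compose $(\operatorname{id},\nabla\phi)_\#\mu$ with the martingale kernel first, obtaining $\pi\in\Pi(\mu,\nu)$ with spread-out kernels $\pi_x$, and then claim that \emph{any} competitors $(q_k)$ with $\sum q_k=\sum\pi_{x_k}$, whose barycenters satisfy only $\sum y_k=\sum\nabla\phi(x_k)$, verify
$\sum_k|x_k-\nabla\phi(x_k)|^2\le\sum_k|x_k-y_k|^2$
via ``the cyclical inequality of \cite{Zh18}''. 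This step fails: once $\pi_{x_k}$ has non-trivial support, the achievable $y_k$ escape the convex hull of $\{\nabla\phi(x_j)\}$, and the inequality can actually reverse. For instance in $d=1$, with $\nabla\phi(x)=x/2$, $x_1=0$, $x_2=1$, $\pi_{x_1}=\tfrac12\delta_{-1}+\tfrac12\delta_1$, $\pi_{x_2}=\delta_{1/2}$, the competitor $q_1=\tfrac12\delta_{-1}+\tfrac12\delta_{1/2}$, $q_2=\tfrac12\delta_1+\tfrac12\delta_{1/2}$ has $y_1=-\tfrac14$, $y_2=\tfrac34$, $\sum y_k=\tfrac12=\sum\nabla\phi(x_k)$, yet
$\sum_k|x_k-y_k|^2=\tfrac18<\tfrac14=\sum_k|x_k-\nabla\phi(x_k)|^2$.
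So the composed $\pi$ is \emph{not} $C$-monotone, and Zhang's lemma cannot be invoked in the way you propose: it controls $\nabla\phi$ at sample points $x_k$, not arbitrary barycenters $y_k$ obtained from measure-theoretic partitions. You must stay with the Dirac set $\Gamma$ and the permutation-polytope argument, as the paper does, or find another route.

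\textbf{Direction $\ref{it:bs1}\Rightarrow\ref{it:bs2}$.} The paper does \emph{not} prove this direction; it defers to \cite{GoJu18,BaBePa18}. Your attempt via the necessity of $C$-monotonicity (Theorem \ref{thm:C-monotonicity}), testing competitors of two kinds --- permutations $(q_k)=(p_{\sigma(k)})$ to extract classical $c$-cyclical monotonicity of the barycentric map $b$, and two-point mixtures $(q_1,q_2)=(\alpha p_1+(1-\alpha)p_2,(1-\alpha)p_1+\alpha p_2)$ to extract firm non-expansiveness after differentiating in $\alpha$ --- is a genuinely different and attractive route. The pointwise extraction is correct, and the approach nicely mirrors the classical proof of Brenier's theorem. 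The remaining issue, which you flag yourself, is the extension: passing from cyclical monotonicity plus firm non-expansiveness on a $\mu$-full set to a globally defined convex $\phi\in C^1(\R^d)$ with $1$-Lipschitz gradient containing that set in $\partial\phi$. This is exactly the Rockafellar--Strassen Corollary, which the paper derives \emph{from} Theorem \ref{Brenier-Strassen}, so citing it here would be circular unless \cite{Zh18} proves this extension independently; that needs to be checked carefully and is not automatic from \cite[Lemma 4]{Zh18}, which is an inequality for $\phi$ already given, not a construction of $\phi$ from a cyclically constrained graph.
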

Theorem \ref{Brenier-Strassen} can be found in  \cite[Theorem 2.1]{GoJu18}. A proof of the first implication using different arguments is given in \cite[Theorem 1.4]{BaBePa18}.
	
	\begin{proof}[Proof of Theorem \ref{Brenier-Strassen}]
		Here we will only show ``\ref{it:bs2}$\implies$ \ref{it:bs1}".
		To this end, we want to verify that $$\Gamma := \left\{(x,\delta_{\nabla \phi(x)}) \in \R^d\times \mathcal P(\R^d)\colon x \in \R^d \right\}$$
		is $C$-monotone, where $C(x,p):= \big|x-\int yp(dy)\big|^2$. 
		
		Let $N\in\N$, $x_1,\ldots,x_N \in \R^d$ with $y_i = \nabla\phi(x_i)$.
 We have to show that if $\sum_{i=1}^N m_i =  \sum_{i=1}^N\delta_{y_i}$ then $\sum_{i=1}^N |x_i-y_i|^2\leq \sum_{i=1}^N |x_i-\int ym_i(dy)|^2$. Clearly we must have $m_i = \sum_{j=1}^N \alpha_{i,j} \delta_{y_j} $ for all $i$, where $(\alpha_{i,j}) \in \R^{N\times N}_+$ is a bistochastic matrix. 
		We can rewrite $\alpha = (\alpha_{i,j})$ as a convex combination of permutation matrices $(P_{\sigma})_{\sigma \in \Sigma}$, where $\Sigma$ denotes the set of permutations on $\{1,\ldots,N\}$:
		\begin{align*}
			\alpha = \sum_{\sigma\in\Sigma} \beta_\sigma P_\sigma,\quad \sum_{\sigma\in\Sigma} \beta_\sigma = 1,\quad \beta_\sigma \geq 0.
		\end{align*}
		Note that the map $F\colon\R^{N!} \to \R$
		\begin{align*}
			F((\beta_\sigma)_{\sigma\in\Sigma}) =\frac{1}{2} \sum_{i=1}^N\Big | x_i - \sum_{\sigma\in\Sigma} \beta_\sigma y_{\sigma(i)} \Big |^2
		\end{align*}
		is convex. We seek to show that
		\begin{align*}
			\hat \beta_\sigma = \begin{cases} 1 & \sigma = id,\\ 0 & \text{else,} \end{cases}
		\end{align*}
		is a minimum on the simplex on $\R^{N!}$. The gradient of $F$ at $\hat \beta$ has the following form
		\begin{align*}
			\nabla F(\hat \beta) = \left(\sum_{i=1}^N (x_i - y_i)\cdot (y_i - y_{\sigma(i)})\right)_{\sigma\in\Sigma}.
		\end{align*}
		For any permutation $\sigma\in\Sigma$ we compute
		\begin{align*}
			\sum_{i=1}^N (x_i - y_i) \cdot (y_i - y_{\sigma(i)}) &= \sum_{i=1}^N x_i \cdot (y_i - y_{\sigma(i)}) - \frac{1}{2} | y_i - y_{\sigma(i)}|^2 \\
			&= \sum_{i=1}^N (x_i - x_{\sigma^{-1}(i)})\cdot y_i - \frac{1}{2} | y_i - y_{\sigma^{-1}(i)}|^2  \\ &\geq 0.
		\end{align*}
	The last inequality above follows from \cite[Lemma 4]{Zh18}, which states that $\phi$ being convex and $1$-Lipschitz is equivalent to
		\begin{align*}
			\phi(z) - \phi(x) \geq \nabla \phi(x) \cdot (z - x) + \frac{1}{2} | \nabla \phi(z) - \nabla \phi(x)|^2\quad \forall x,z\in\R^d,
		\end{align*}
		so by summing this last inequality we get
		\begin{align*}
		 0 &= \sum_{i=1}^N \phi(x_{\sigma(i)}) - \phi(x_i) \\
		 &\leq \sum_{i=1}^N (x_i - x_{\sigma(i)}) \cdot y_i - \frac{1}{2} | y_i -  y_{\sigma(i)}|^2.
		\end{align*}
		We conclude that $\nabla F(\hat \beta)$ is pointwise non-negative proving optimality of $\hat\beta$ on the simplex by convexity of $F$.
		
	All in all, the coupling $(id,\nabla\phi)_\# \mu$ is concentrated on the $C$-monotone set $\Gamma$. Since both marginals are in $\mathcal P_2(\R^d)$ we can apply Theorem \ref{thm:stability} which yields its optimality for the weak transport problem $V_2(\mu,\nu)$.
	\end{proof}
	
	An immediate consequence of the  Brenier-Strassen Theorem \ref{Brenier-Strassen}, coupled with the proof therein, is the following modification of the classical Rockafellar theorem. We denote the subdifferential of a convex function $\phi$ at $x \in \R^d$ by $\partial \phi(x)$ and by $\partial \phi = \{(x,y)\colon x \in \R^d, y \in \partial \phi(x)\}$ its graph.
	
	\begin{corollary}[Rockafellar-Strassen]
		Let $L \in \R^+$ and $\Gamma \subset \R^d \times \R^d$. The following are equivalent:
		\begin{enumerate}
			\item \label{it:rs1} $\Gamma$ satisfies for all\footnote{Here $(x_{n+1},y_{n+1}):=(x_1,y_1)$.} $n \in \N,\, ((x_1,y_1),\ldots,(x_n,y_n)) \in \Gamma$,
		\begin{align}\label{eq:strassenmonotone}
			\sum_{i=1}^n (x_{i+1} - x_i) \cdot y_i + \frac{1}{2L}|y_{i+1}-y_i|^2 \leq 0.
		\end{align}
			\item \label{it:rs2} there exists a convex $\phi \in C^1(\R^d)$ with L-Lipschitz gradient such that $\Gamma \subset \partial \phi$.

		\end{enumerate}
	\end{corollary}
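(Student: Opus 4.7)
The plan is to split the equivalence into two parts. For the direction ``\ref{it:rs2} $\Rightarrow$ \ref{it:rs1}'', the argument is essentially the computation already appearing at the end of the Brenier--Strassen proof: summing the \cite[Lemma 4]{Zh18} inequality around a cycle $((x_i,y_i))_{i=1}^n$ in $\Gamma$ (with $(x_{n+1},y_{n+1}) := (x_1,y_1)$) yields
$$0 = \sum_{i=1}^n \bigl[\phi(x_{i+1}) - \phi(x_i)\bigr] \geq \sum_{i=1}^n (x_{i+1} - x_i) \cdot y_i + \tfrac{1}{2L}|y_{i+1} - y_i|^2,$$
which is precisely \eqref{eq:strassenmonotone}.

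For the harder direction ``\ref{it:rs1} $\Rightarrow$ \ref{it:rs2}'', I first rescale $y \mapsto y/L$ and $\phi \mapsto \phi/L$ to reduce to the case $L=1$. The key observation I will verify is that the substitution $z_i := x_i - y_i$ converts \eqref{eq:strassenmonotone} into the classical cyclical monotonicity of $\tilde\Gamma := \{(x-y, y) : (x,y) \in \Gamma\}$. The computation hinges on the telescoping cycle identity $\sum_{i=1}^n (y_{i+1} - y_i) \cdot y_i = -\tfrac{1}{2}\sum_{i=1}^n |y_{i+1} - y_i|^2$, which exactly cancels the quadratic penalty in \eqref{eq:strassenmonotone} and leaves $\sum_{i=1}^n (z_{i+1} - z_i) \cdot y_i \leq 0$.

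Once this is in place, I invoke the classical Rockafellar theorem for $\tilde\Gamma$ to obtain a proper, convex, lower semicontinuous function $\psi\colon \R^d \to \R \cup \{+\infty\}$ such that $y \in \partial\psi(x-y)$ for every $(x,y) \in \Gamma$. The candidate is then
$$\phi := \bigl(\psi^* + \tfrac{1}{2}|\cdot|^2\bigr)^*,$$
whose Fenchel conjugate $\psi^* + \tfrac{1}{2}|\cdot|^2$ is proper, lower semicontinuous and $1$-strongly convex. By the standard Fenchel duality between strong convexity and Lipschitz gradient, $\phi$ is then finite-valued on $\R^d$, convex, and of class $C^1$ with $1$-Lipschitz gradient. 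To verify $\Gamma \subseteq \partial\phi$, I use Fenchel--Young: $y \in \partial\psi(x-y)$ rewrites as $x - y \in \partial\psi^*(y)$, hence $x \in y + \partial\psi^*(y) = \partial\bigl(\psi^* + \tfrac{1}{2}|\cdot|^2\bigr)(y)$, which is equivalent to $y = \nabla\phi(x)$. Undoing the initial rescaling yields the $L$-Lipschitz gradient version.

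The main obstacle I expect is the clean handling of the Fenchel-conjugation step when $\psi$ is merely proper convex lsc (possibly infinite-valued): one must ensure $\phi$ is finite and $C^1$ on the whole of $\R^d$. This is taken care of by the extra $\tfrac{1}{2}|\cdot|^2$ summand, which makes $\psi^* + \tfrac{1}{2}|\cdot|^2$ coercive and $1$-strongly convex regardless of the effective domain of $\psi^*$, forcing its conjugate $\phi$ to be finite and have a globally defined $1$-Lipschitz gradient.
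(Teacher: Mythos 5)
Your proof is correct, but it takes a genuinely different route from the paper's. The paper proves \textquotedblleft(1)\,$\Rightarrow$\,(2)\textquotedblright\ by placing a probability measure $\mu$ on the closure of $\proj_1\Gamma$, observing that the Strassen-type inequality \eqref{eq:strassenmonotone} forces a single-valued $1$-Lipschitz map $T$ from $\proj_1\Gamma$ to $\proj_2\Gamma$, showing that $(\mathrm{id},T)_\#\mu$ is $C$-monotone and hence optimal for the weak transport problem \eqref{eq:Brenier-Strassen2}, and then invoking the Brenier--Strassen Theorem~\ref{Brenier-Strassen} to produce the smooth $\phi$. Your proof bypasses all of this: after reducing to $L=1$, you make the change of variables $z:=x-y$, verify via the telescoping identity $\sum_{i}(y_{i+1}-y_i)\cdot y_i=-\tfrac12\sum_i|y_{i+1}-y_i|^2$ that $\tilde\Gamma=\{(x-y,y):(x,y)\in\Gamma\}$ is classically cyclically monotone, apply the classical Rockafellar theorem to get a proper convex lsc $\psi$ with $\tilde\Gamma\subset\partial\psi$, and then set $\phi:=\bigl(\psi^*+\tfrac12|\cdot|^2\bigr)^*$, which is automatically finite, $C^1$, convex with $1$-Lipschitz gradient (equivalently $\nabla\phi=\mathrm{prox}_{\psi^*}$), and contains $\Gamma$ in its graph by the Fenchel--Young chain $y\in\partial\psi(x-y)\iff x-y\in\partial\psi^*(y)\implies x\in\partial\bigl(\psi^*+\tfrac12|\cdot|^2\bigr)(y)\iff y=\nabla\phi(x)$. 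Your argument is more elementary and self-contained: it requires only the classical Rockafellar theorem and the standard strong-convexity/Lipschitz-gradient Fenchel duality, with no measure theory and no appeal to the Brenier--Strassen result. The paper's argument, by contrast, is chosen to illustrate the weak transport machinery that is the subject of the article, and derives the corollary as a genuine byproduct of Theorem~\ref{Brenier-Strassen}; it also re-proves (somewhat redundantly, given the theorem it cites) the Lipschitz property of $T$. The one thing the paper's route delivers that yours does not is a reconfirmation that $C$-monotonicity is a workable optimality criterion, but as a proof of the stated corollary yours is cleaner.
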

	
	\begin{proof}
		The implication `\ref{it:rs2}$\implies$\ref{it:rs1}' can be easily deduced from \cite[Lemma 4]{Zh18}.
	For `\ref{it:rs1}$\implies$\ref{it:rs2}': Since we can always consider $\tilde \Gamma = \{(x,y) \colon (x,Ly) \in \Gamma\}$, which sastisfies \eqref{eq:strassenmonotone} for $L=1$, we can assume w.l.o.g.\ that $L = 1$. Let $\mu$ be a probability measure on $\R^d$ such that
		$$\supp(\mu) = \text{cl}(\{x \colon \exists y,(x,y) \in \Gamma \}).$$
		By \eqref{eq:strassenmonotone}, if $(x,y), (x,y') \in \Gamma$ then $y = y'$. This defines a map $T\colon \proj_1 \Gamma \to \proj_2 \Gamma$. Further, for all $(x,y),(x',y') \in \Gamma$, we have
		$$-|y-y'|^2 \geq (x-x')\cdot (y-y') \geq -|x-x'| |y-y'|,$$
		which shows that $T$ is 1-Lipschitz. The coupling $\pi = (id,T)_\# \mu$ is optimal for \eqref{eq:Brenier-Strassen2} between its marginals $\mu$ and $\eta^*$, by the reasoning in the proof of Theorem \ref{Brenier-Strassen} . Hence, by Theorem \ref{Brenier-Strassen} there is a convex function $\phi$ with 1-Lipschitz gradient such that $(id,\nabla \phi)_\# \mu$ is the unique optimizer. Thus, $\nabla \phi = T$ ($\mu$-a.s.). Due to continuity we conclude $\nabla \phi = T$ on $\proj_1 \Gamma$ and $\Gamma \subset \partial \phi$.
	\end{proof}
	
	\section{Multiple-Good monopoly problem}\label{se:Daskalakis}
	
The goal of this section is to recover the main result of  Daskalakis, Deckelbaum, and Tzamos \cite{DaDeTz17} in Corollary~\ref{cor:daskalakis}  below. We will not discuss the economic interpretation and just mention that it can be interpreted as a Kantorovich-Rubinstein Theorem for specific weak transport costs. 

We will obtain Corollary~\ref{cor:daskalakis} as a consequence of the more general result Theorem \ref{thm:daskalakis}. We first introduce some notation. 
	Fix $X:=\R^d$ and equip it with the coordinate-wise partial order $\leq$. Let $\Phi_t^{icx}(\R^d) $ consist of all $\leq$-increasing, convex functions in $\Phi_t(\R^d)$. For $\mu,\nu \in \mathcal P_1(X)$
	we write $\mu \leq_{icx} \nu$ iff $\int f\, d\mu \leq \int f\, d\nu$ for all $f \in \Phi_1^{icx}(X)$. Once again, it follows from the results of Strassen \cite{St65} that 
$\mu \leq_{icx} \nu$ is tantamount to the existence of a stochastic process $(Z_i)_{i=1,2}$ satisfying 
$$ Z_1 \sim \mu, Z_2\sim \nu, Z_1 \leq \E[Z_2|Z_1].$$
	
	\begin{theorem}\label{thm:daskalakis}
		Let $\theta \in \Phi_{b,t}(\R^d)$ be convex and denote $$\textstyle C_{\theta,icx}(x,p) := \inf_{q \leq_{icx} p} \theta\left(x-\int y q(dy)\right)\text{ and }R_\theta \phi(x) := \inf_{y \leq z,z\in\R^d} \phi(z) + \theta(x-y).$$
		Then all of the following optimization problems yield the same value:
		\begin{enumerate}[label = \roman*)]
		\item \label{it:dask1}$\inf_{\pi \in \Pi(\mu,\nu)} \int_{\R^d} C_{\theta,icx}(x,\pi_x) \mu(dx)$,
		\item \label{it:dask2}$\inf_{\tilde \nu \leq_{icx} \nu} \inf_{\pi \in \Pi(\mu,\tilde \nu)}\int \theta(x-y) \pi(dx,dy)$,
		\item \label{it:dask3}$\inf_{\mu \leq_{icx} \tilde \mu, \tilde \nu \leq_{icx} \nu} \inf_{\pi \in \Pi(\tilde \mu,\tilde \nu)}\int \theta(x-y) \pi(dx,dy)$,
		\item \label{it:dask4}$\sup_{\phi \in \Phi_t^{icx}(\R^d)} -\nu(\phi) + \int R_\theta \phi(x)\mu(dx)$.
		\end{enumerate}
	\end{theorem}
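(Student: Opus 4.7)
My plan is to prove a cyclic chain of inequalities, namely $\textrm{(iv)} \leq \textrm{(iii)} \leq \textrm{(ii)} \leq \textrm{(i)} \leq \textrm{(iv)}$. The single observation that streamlines everything is the identification
\[
\left\{\textstyle \int y\, q(dy)\colon q \leq_{icx} p\right\} = \left\{m \in \R^d\colon m \leq \textstyle\int y\, p(dy)\right\};
\]
``$\subset$'' comes from testing against the coordinate projections (which are icx), while ``$\supset$'' follows by translating $p$ by $m - \bar p \leq 0$. This allows me to replace $C_{\theta,icx}$ throughout by the simpler working formula
\[
	C_{\theta,icx}(x,p) = \inf_{m \leq \bar p} \theta(x-m), \qquad \bar p := \textstyle\int y\, p(dy).
\]

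For the three easy inequalities: $\textrm{(iii)} \leq \textrm{(ii)}$ is immediate by taking $\tilde\mu = \mu$. For $\textrm{(ii)} \leq \textrm{(i)}$, I fix an optimizer $\pi^*$ of (i) (whose existence is granted by Theorem~\ref{thm:existence}), measurably select an $\varepsilon$-optimal $m_x \leq \bar\pi^*_x$ for $\inf_{m \leq \bar\pi^*_x}\theta(x-m)$, and set $\tilde\nu := (m_\cdot)_\# \mu$, $\pi' := (\id, m_\cdot)_\# \mu$; Jensen's inequality applied to any icx test function $f$ shows $\tilde\nu(f) \leq \int f(\bar\pi^*_x)\mu(dx) \leq \nu(f)$, so $\tilde\nu \leq_{icx} \nu$, and by construction $\int \theta\, d\pi' \leq \textrm{(i)} + \varepsilon$. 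For $\textrm{(iv)} \leq \textrm{(iii)}$, I start from the tautology $\theta(x-y) \geq R_\theta g(x) - g(y)$ for $g \in \Phi_t^{icx}$; integrating against any feasible $(\tilde\mu, \tilde\nu, \pi)$ gives $\int \theta\, d\pi \geq \tilde\mu(R_\theta g) - \tilde\nu(g)$, and the structural fact that $R_\theta g$ is itself icx (convexity of inf-convolutions, monotonicity by a direct translation in the variable) combined with $\mu \leq_{icx} \tilde\mu$ and $\tilde\nu \leq_{icx} \nu$ then yields $\int \theta\, d\pi \geq \mu(R_\theta g) - \nu(g)$; taking the supremum over $g$ delivers (iv).

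The delicate step is $\textrm{(i)} \leq \textrm{(iv)}$. I would first verify property~\probref{prop:A} for $C_{\theta,icx}$ (convexity in $p$ is immediate from the definition, lower semicontinuity follows from the simplified formula together with continuity of $p \mapsto \bar p$ on $\mathcal P_t$, the lower bound is inherited from $\theta$), so that Theorem~\ref{thm:duality} rewrites (i) as $\sup_{g \in \Phi_t}[-\nu(g) + \mu(R_{C_{\theta,icx}} g)]$. The technical core is then the identity
\[
R_{C_{\theta,icx}} g(x) = R_\theta g^{icx}(x)\qquad \text{for every bounded-below } g \in \Phi_t,
\]
where $g^{icx}$ denotes the largest $\leq$-increasing convex minorant of $g$. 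I expect this to be the main obstacle: after unwinding the inner infimum, it reduces to the Strassen-type duality $\inf_{p \geq_{icx} \delta_m} p(g) = g^{icx}(m)$, whose ``$\geq$'' direction follows from Jensen combined with $g^{icx} \leq g$, while ``$\leq$'' amounts to exhibiting, for any $\varepsilon>0$, a mixture of point masses with barycenter $\geq m$ realising $g^{icx}(m)$ up to $\varepsilon$, via the representation of $g^{icx}$ as an envelope of weighted combinations. Once this identity is in place, $g^{icx}$ belongs to $\Phi_t^{icx}$ (growth is inherited from $g$ since $g^{icx}$ is sandwiched between $g$ and its lower bound, and it is automatically continuous as a convex function), and $\nu(g) \geq \nu(g^{icx})$; hence the dual sup is already realised within $\Phi_t^{icx}$ against $R_\theta g^{icx}$, which is precisely the expression for (iv).
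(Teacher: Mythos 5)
Your proof is correct and takes a genuinely different, more economical route than the paper. The paper establishes the three identities (i)\,=\,(ii), (ii)\,=\,(iii), (i)\,=\,(iv) separately; the harder halves of the first two rely on Strassen's theorem (construction of submartingale couplings) together with Jensen's inequality to pass from a coupling of $(\mu,\tilde\nu)$ to one in $\Pi(\mu,\nu)$, and from $(\tilde\mu,\tilde\nu)$ down to $(\mu,\tilde\nu)$. Your cyclic chain $\text{(iv)}\leq\text{(iii)}\leq\text{(ii)}\leq\text{(i)}\leq\text{(iv)}$ sidesteps both of those constructions: the only nontrivial steps are the weak-duality inequality for (iv)\,$\leq$\,(iii) and the duality-based inequality (i)\,$\leq$\,(iv), and the directions (ii)\,$\leq$\,(i) and (ii)\,$\leq$\,(iii) that the paper proves directly drop out of the cycle for free. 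For (i)\,$\leq$\,(iv), where the paper runs an iterated $c$-convexification ($\hat\phi\to\psi\to\bar\phi\to\tilde\phi$), you instead prove the single identity $R_{C_{\theta,icx}}g=R_\theta g^{icx}$, which combined with $g^{icx}\leq g$ localizes the dual supremum to $\Phi_t^{icx}$ in one stroke. You flag the identity $\inf_{\delta_m\leq_{icx}p}p(g)=g^{icx}(m)$ as the main obstacle, but it is in fact elementary and does not need the full Strassen theorem: using your identification $\{p\colon\delta_m\leq_{icx}p\}=\{p\colon\bar p\geq m\}$, the function $h(m):=\inf_{\bar p\geq m}p(g)$ is increasing (the constraint set shrinks as $m$ grows), convex (mix the measures), dominated by $g$ (take $p=\delta_m$), and bounded below by $g^{icx}$ (apply Jensen to $g^{icx}\leq g$ and then monotonicity), whence $h=g^{icx}$. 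Both approaches deliver the theorem; yours is shorter and makes the reliance on weak duality more transparent, since it only invokes the trivial halves of the $\leq_{icx}$ manipulations.
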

	
	\begin{proof}
		``$i)=ii)$": For $(i)\geq (ii)$ we first take $\pi$ to be an (almost) optimizer of $(i)$ and by a measurable selection argument take $q_x$ to be  an (almost) optimizer of $C_{\theta,icx}(x,\pi_x)$. Defining $T(x):=\int y q_x(dy)$ we remark that $T(\mu)\leq_{icx} \nu$, and so $(i)\geq \int \theta(x-T(x))\mu(dx)\geq (ii)$. For the converse, take $\tilde \nu \leq_{icx} \nu$ and $P\in \Pi(\mu,\tilde \nu)$ which are (almost) optimizers of $(ii)$. By Strassen's result \cite[Theorem 9]{St65} 
		 there exists a submartingale coupling $\tilde\pi \in \Pi(\mu,\nu)$, i.e.\ a coupling $\pi $ satisfying $\int y\, d\pi_x \geq x$. 
		 
		 Next we define $\pi(dx,dy)=\int\tilde\pi_{\tilde x}(dy) P(dx,d\tilde x)$ which belongs to $\Pi(\mu,\nu)$. Since by definition $\theta(x-\tilde x)\geq C_{\theta,icx}(x,\tilde \pi_{\tilde x})$, we have by Jensen's inequality
		\begin{align*}(ii)=\int \theta(x-\tilde x)P(dx,d\tilde x)&\geq \int C_{\theta,icx}(x,\tilde \pi_{\tilde x})P(dx,d\tilde x) \\ &\geq \int C_{\theta,icx}\Big (x,\int\tilde \pi_{\tilde x}P_{x}(d\tilde x) \Big)\mu(dx)\\
&\geq  \int C_{\theta,icx} (x,\pi_x )\mu(dx)		 \geq (i),
		 \end{align*} 
where we used that $C_{\theta,icx}(x,\cdot)$ is convex if $\theta$ is convex.		
		
				``$ii) = iii)$": Fix $\mu \leq_{icx} \tilde \mu$ and $\tilde \nu \leq_{icx} \nu$. Let $X_1 \leq_{icx} X_2$ and $Y_1 \leq_{icx} Y_2$ be $\R^d$-valued random variables on some probability space whose laws satisfy
		$$X_1 \sim \mu,\quad X_2 \sim \tilde \mu,\quad Y_1 \sim \tilde \nu, \quad Y_2 \sim \nu.$$
		The order $\leq_{icx}$ between random variables has to be understood in the following sense, (where the existence of such random variables is again provided by \cite[Theorem 9]{St65})
		\begin{align*}
			X \leq_{icx} Y \iff \mathbb E[Y|X] \geq X\quad \text{a.s.}
		\end{align*}
		Define the random variable $Z = X_1 + \E[Y_1 - X_2|X_1]$. Then $Z$ is in increasing convex order to $Y_2$, since
		\begin{align*}
			Z \leq_{icx} Z + \E[X_2 - X_1|X_1] = \E[Y_1|X_1] \leq_{icx} Y_1 \leq_{icx} Y_2.
		\end{align*}
		 Applying Jensen's inequality yields
		\begin{align*}
			\E[\theta(X_2-Y_1)] \geq \E[\theta(\E[X_2-Y_1|X_1])] = \E[\theta(X_1 - Z)] \geq ii).
		\end{align*}
		
		``$i) = iv)$": By duality, see Theorem \ref{thm:duality}, we have
		\begin{align}\label{eq:application KD}
			i) = \sup_{\phi \in \Phi_{b,t}(\R^d)} -\nu(\phi) + \int R_{C_{\theta,icx}} \phi(x) \mu(dx).
		\end{align}
		It remains to show that we can additionally restrict the infimum  to convex and increasing functions which will be accomplished using the double $c$-convexification trick. To this end, we note that 
		\begin{align*}\textstyle
			C_{\theta,icx}(x,p) = \inf_{x - \int y p(dy) \leq z} \theta(z) =: \hat \theta\left(x-\int yp(dy)\right),
		\end{align*}
		where $\hat \theta$ is a function on $\R^d$ which is bounded from below and convex. Further note
		\begin{align}\label{eq:inc dec}
			x \mapsto \hat \theta(x-y) \text{ is increasing},\quad y \mapsto \hat\theta(x-y)\text{ is decreasing}.
		\end{align}
		Analogously to Gozlan et al.\ \cite[Proof of Theorem 2.11 (2)]{GoRoSaSh18} we find that
		\begin{align*}
			\inf_{y\in\R^d} \phi(y) + \hat\theta(x-y) = R_{C_{\theta,icx}} \phi(x) = R_{C_{\theta,icx}} \hat \phi(x) = \inf_{y \in \R^d} \hat\phi(y) + \hat \theta(x-y),
		\end{align*}
		where $\hat\phi$ denotes the convex envelope of $\phi$. The $\inf$-convolution $\psi := R_{C_{\theta,icx}}\hat \phi$ is therefore bounded from below, convex and increasing. Note that for any $p \in \mathcal P(\R^d)$ with barycenter $\int y p(dy) = z$ we have
		\begin{align*}
			\psi(x) - p(\hat \phi) \leq \psi(x) - \hat \phi(z) \leq \hat \theta(x-z),
		\end{align*}
		which allows us to $\hat \theta$-convexify $\hat \phi$, i.e.,
		\begin{align*}
			\bar \phi(y) = \sup_x \psi(x) - \hat \theta(x-y),
		\end{align*}
		which is in particular an increasing function by \eqref{eq:inc dec} with $$\hat\phi \geq \bar \phi \geq \psi(y) - \hat \theta(0)\geq \min_y \psi(y) - \hat \theta(0).$$ Again, we find that the convex envelope of $\bar \phi$
		\begin{align}\label{eq:def tilde phi}
			\tilde \phi(z) = \inf_{p\in\mathcal P_t(Y)\colon z = \int y p(dy)}  p(\bar\phi)
		\end{align}
		is increasing, convex and dominated by $\bar \phi$,  thus,
		\begin{align*}
			\tilde \psi(x) := R_{C_{\theta,icx}} \tilde \phi (x) = R_{C_{\theta,icx}} \bar \phi(x)  \geq \psi(x) = R_{C_{\theta,icx}} \phi(x).
		\end{align*}
		Finally, we obtain (since $\tilde \phi \leq \phi$ and $\psi \leq \tilde \psi$) that
		\begin{align*}
			\mu(\psi) - \nu(\phi) \leq \mu(\tilde \psi) - \nu(\tilde \phi),
		\end{align*}
		which shows that we can replace $\Phi_{b,t}(\R^d)$ with $\Phi_{b,t}^{icx}(\R^d)$ in \eqref{eq:application KD}.
	\end{proof}
	
	The proof of Corollary~\ref{cor:daskalakis} resembles the proof of Kantorovich-Rubinstein duality when one already knows that classical Kantorovich duality holds.	
		
	\begin{corollary}\label{cor:daskalakis} Setting  $\theta(x) = \lvert x \rvert$ where $|\cdot|$ is a norm on $\R^d$, we have
		\begin{align*}
			\inf_{\pi \in \Pi(\mu,\nu)} \int_{\R^d} \inf_{z \leq \int y \pi_x(dy)} |x-z| \mu(dx) = \sup_{\phi\in \Phi_{b,1}^{icx}(\R^d)\text{ and 1-Lipschitz}} \mu(\phi) - \nu(\phi).
		\end{align*}
	\end{corollary}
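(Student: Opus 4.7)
The plan is to specialize Theorem~\ref{thm:daskalakis} to $\theta(x)=|x|$ and $t=1$, and then to identify both sides of the corollary with items \ref{it:dask1} and \ref{it:dask4} of that theorem, respectively. The argument amounts to a Kantorovich--Rubinstein style collapse of the dual problem.

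First, I would verify that the LHS of the corollary is exactly item \ref{it:dask1}. By the substitution $w=x-z$, the inner infimum $\inf_{z\le \int y\,\pi_x(dy)} |x-z|$ equals $\inf_{w\ge x-\int y\,\pi_x(dy)}|w|$, which is precisely $\hat\theta(x-\int y\,\pi_x(dy))=C_{\theta,icx}(x,\pi_x)$ as identified in the proof of Theorem~\ref{thm:daskalakis}. Thus the LHS of the corollary coincides with \ref{it:dask1}, and by the theorem with the supremum in \ref{it:dask4}:
\begin{align*}
\sup_{\phi\in\Phi_{b,1}^{icx}(\R^d)}\Bigl(-\nu(\phi)+\int R_\theta\phi(x)\,\mu(dx)\Bigr).
\end{align*}

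Next I would show that for $\theta(x)=|x|$ the operator $R_\theta$ simplifies to the usual inf-convolution with the norm, and that its output is automatically $1$-Lipschitz, increasing, convex, and bounded from below. Because $\phi$ is increasing, $\inf_{z\ge y}\phi(z)=\phi(y)$, so
\begin{align*}
R_\theta\phi(x)=\inf_{y\le z}\phi(z)+|x-y|=\inf_{y\in\R^d}\phi(y)+|x-y|=:\psi(x).
\end{align*}
Standard inf-convolution arguments give convexity and $1$-Lipschitz continuity w.r.t.\ $|\cdot|$. Monotonicity of $\psi$ follows by a translation trick: for $x\le x'$ and an $\varepsilon$-optimal $y$ in the definition of $\psi(x')$, the shifted point $y+(x-x')\le y$ is admissible for $\psi(x)$ and yields the same value $|x-(y+(x-x'))|=|x'-y|$ while $\phi(y+(x-x'))\le\phi(y)$. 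Lower-boundedness is inherited from $\phi$.

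For the upper estimate, since trivially $\psi\le\phi$ (take $y=x$), one has $-\nu(\phi)\le -\nu(\psi)$, hence
\begin{align*}
-\nu(\phi)+\mu(R_\theta\phi)\le \mu(\psi)-\nu(\psi),
\end{align*}
and $\psi$ is a competitor in the RHS of the corollary. For the reverse inequality, if $\phi$ is already $1$-Lipschitz, convex and increasing, then $\phi(y)+|x-y|\ge\phi(x)$ with equality at $y=x$, so $R_\theta\phi=\phi$ and the two dual values coincide. Combining these with the theorem yields the claimed identity.

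The only real subtlety—and the main obstacle—is housekeeping: verifying that $\psi=R_\theta\phi$ lies in the admissible class $\Phi_{b,1}^{icx}(\R^d)$ with the correct integrability and that passing from $\Phi_{b,1}^{icx}$ to its $1$-Lipschitz subclass does not shrink the supremum. The latter is handled by the construction $\psi\le\phi$ above, and the former by the remark following Theorem~\ref{thm:duality} that the dual supremum may be restricted to functions in $\Phi_t$ bounded from below.
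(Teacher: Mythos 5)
Your proof is correct and follows essentially the same strategy as the paper: both hinge on the two observations that $R_\theta$ maps $\Phi_{b,1}^{icx}(\R^d)$ into the $1$-Lipschitz increasing convex functions (so the dual supremum may be restricted to that class), and that $R_\theta$ acts as the identity on $1$-Lipschitz increasing functions. Your shortcut of directly replacing $\phi$ by the competitor $\psi=R_\theta\phi$, using only $\psi\le\phi$, is a slight streamlining of the paper's argument, which instead re-traces the $\bar\phi,\tilde\phi$ construction from the proof of Theorem~\ref{thm:daskalakis} to reach the same conclusion.
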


	\begin{proof}
		First we show that given $\phi \in \Phi_{b,1}^{icx}(\R^d)$ the $\inf$-convolution $\psi := R_{C_{|\cdot |,icx}}\phi$ is 1-Lipschitz: Let $x,x' \in \R^d$ then
		\begin{align*}
			R_{C_{|\cdot|,icx}} \phi(x) - R_{C_{|\cdot|,icx}} \phi(x') \leq \sup_{z\leq y,\ y \in \R^d} |x-y| - |x'-y| \leq |x-x'|.
		\end{align*}
		By the proof of Theorem~\ref{thm:daskalakis} $\psi$ is additionally bounded from below, convex and increasing. Using the notation in the proof of Theorem~\ref{thm:daskalakis} the mapping $\bar \phi$ is increasing, bounded from below and also 1-Lipschitz. Hence, for $x,x'\in\R^d$ the increasing, convex and lower bounded function $\tilde \phi$, see \eqref{eq:def tilde phi}, is 1-Lipschitz
		\begin{align*}
			\tilde \phi(x) - \tilde \phi(x') \leq \sup_{p \in \mathcal P_1(Y)} \mathcal W_1(p, T_{x'-x}(p)) = |x-x'|,
		\end{align*}
		where $T_{x'-x}$ is the translation by $x'-x$. By the conclusion of the proof of Theorem~\ref{thm:daskalakis} we can assume w.l.o.g.\ that $\phi \in \Phi_{b,1}^{icx}(\R^d)$ is 1-Lipschitz.
		
		It remains to show that $R_{C_{|\cdot|,icx}}\phi$ and $\phi$ coincide. By definition of the $\inf$-convolution we have $\phi \geq R_{C_{|\cdot|,icx}}$. We find by 1-Lipschitz continuity of $\phi$
		\begin{align*}
			- \phi(x) + \phi(y) + \inf_{z \leq y} |x-z| \geq \inf_{z \leq y} -\phi(x) + \phi(z) + |x-z| \geq 0,
		\end{align*}
		which shows the reverse inequality.		
	\end{proof}
	

\section{Backward transfers}\label{se:transfers}

Bowles and Ghoussoub \cite{BoGh19} suggest a notion of linear transfer between probability measures which is more encompassing than mass transportation but still admits important traits of the dual theory of mass transport. In particular, they identify many examples that illustrate the scope of their approach. 

A main result of Bowles and Ghoussoub yields a representation of linear transfers through weak transport problems, see  \cite[Theorem 3.1]{BoGh19}. In this section, we recover \cite[Theorem 3.1]{BoGh19} as an application of the weak transport duality theorem. Notably, this approach extends the result of Bowles and Ghoussoub from compact to general Polish spaces without additional effort.\footnote{As noted on \cite[page 3]{BoGh19}, the right setting for most applications of linear transfers should be `$\ldots$ complete metric spaces, Riemannian manifolds or at least $\R^n$.'. In this respect the extension of \cite[Theorem 3.1]{BoGh19} beyond the compact setup seems relevant.}

To present the notion of linear transfer, we introduce some notation. 
The basic object of interest are functionals $\mathcal T\colon \mathcal P(X)\times \mathcal P_t(Y)\to \R\cup \{+\infty\}$. We will use the Legendre transform $\mathcal T^*_\mu$ of $\mathcal T_\mu = \mathcal T(\mu,\cdot)$, which is given by
\begin{align*}
	\mathcal T^*_\mu(g) = \sup_{\nu \in \mathcal P_t(Y)} \nu(g) - \mathcal T(\mu,\nu).
\end{align*}
Since the set $\Phi_{t}(Y)$ is in separating duality with $\mathcal M_t(Y)$, the Fenchel duality theorem \cite[Theorem 2.3.3]{Za02} states  
\begin{align}\label{eq:fenchel duality}
	\mathcal T_\mu(\nu) = \mathcal{T}_\mu^{**}(\nu) = \sup_{g \in \Phi_t(Y)} \nu(g) - \mathcal T^*_\mu(g),
\end{align}
if $\mathcal T_\mu$ is proper convex, bounded from below and lower semicontinuous.

\begin{definition}\label{def:backward transfer}
	A proper, convex, bounded from below and lower semicontinuous functional $\mathcal T\colon \mathcal P(X) \times \mathcal P_t(Y)\to \R \cup \{+\infty\}$ is called backward linear transfer if there exists a map $T$ from $\Phi_t(Y)$ to the set of universally measurable functions on $X$ bounded from below by an element in $\Phi_t(X)$, with the following property: for each $\mu \in \mathcal P(X)$ with $\inf_{\nu \in \mathcal P_t(Y)}\mathcal T(\mu,\nu) < \infty$ the Legendre transform $\mathcal T_\mu^*$ can be represented as
	\begin{align}\label{eq_transfers_defi}
		\mathcal T_\mu^*(g) = \mu(T(g))\quad \forall g \in \Phi_t(Y).
	\end{align}
\end{definition}

\begin{theorem}
	Let $\mathcal T\colon \mathcal P(X) \times \mathcal P_t(Y)\to \R\cup\{+\infty\}$ be such that
	\begin{align}\label{eq:dirac in support}
		\forall x\in X,~ \exists p\in \mathcal P_t(Y)\colon\quad \mathcal T(\delta_x,p) < \infty.
	\end{align}
	Then the following are equivalent
	\begin{enumerate}[label = (\roman*)]
		\item \label{it:ghoussoub1}$\mathcal T$ is a backward linear transfer,
		\item \label{it:ghoussoub2} there is a lower semicontinuous cost function $C\colon X \times \mathcal P_t(Y)\to \R\cup \{+\infty\}$ which is bounded from below and convex in the second argument such that for all $(\mu,\nu) \in \mathcal P(X)\times \mathcal P_t(Y)$
		$$\mathcal T(\mu,\nu) = \inf_{\pi \in \Pi(\mu,\nu)} \int_X C(x,\pi_x)\mu(dx).$$
	\end{enumerate}
\end{theorem}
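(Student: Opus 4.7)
The plan is to match the Fenchel biduality underlying the definition of a backward linear transfer with the weak transport duality of Section~\ref{se:framework}: both force the identification of a functional on $\mathcal P_t(Y)$ (for fixed first marginal) with its Legendre transform on $\Phi_t(Y)$, and the two transforms will be shown to coincide pointwise, giving a full identification of $\mathcal T(\mu,\cdot)$ with $V_C(\mu,\cdot)$.

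Direction \ref{it:ghoussoub2}$\Rightarrow$\ref{it:ghoussoub1}: setting $\mathcal T(\mu,\nu):=V_C(\mu,\nu)$, the structural requirements follow from the basic theory: joint lower semicontinuity is Theorem~\ref{thm:existence}; joint convexity follows from a standard disintegration of convex combinations of couplings, which reduces to convexity of $p\mapsto C(x,p)$; properness and lower boundedness are inherited from $C$ together with \eqref{eq:dirac in support}. The candidate transfer map is
\begin{align*}
T(g)(x):=\sup_{p\in\mathcal P_t(Y)}\bigl(p(g)-C(x,p)\bigr)=\mathcal T^*_{\delta_x}(g),
\end{align*}
which is universally measurable by analyticity. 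To verify \eqref{eq_transfers_defi}, I would parametrise the supremum in $\mathcal T^*_\mu(g)=\sup_\nu\bigl(\nu(g)-V_C(\mu,\nu)\bigr)$ by couplings $\pi$ with first marginal $\mu$, absorbing $\nu$ as the second marginal; a measurable-selection argument then allows to interchange the sup with the $\mu$-integral and yields
\begin{align*}
\mathcal T^*_\mu(g)=\int\sup_{p}\bigl(p(g)-C(x,p)\bigr)\mu(dx)=\mu(T(g)).
\end{align*}

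Direction \ref{it:ghoussoub1}$\Rightarrow$\ref{it:ghoussoub2}: define $C(x,p):=\mathcal T(\delta_x,p)$. Lower semicontinuity of $C$ follows from joint lower semicontinuity of $\mathcal T$ and continuity of $x\mapsto\delta_x$; the lower bound is inherited from $\mathcal T$; convexity in $p$ descends from joint convexity of $\mathcal T$; and \eqref{eq:dirac in support} prevents degeneracy. Thus $C$ satisfies Property~\probref{prop:A}. For fixed $\mu$, both $\mathcal T(\mu,\cdot)$ and $V_C(\mu,\cdot)$ are proper, convex and lower semicontinuous on $\mathcal P_t(Y)$, so by \eqref{eq:fenchel duality} it suffices to show the Legendre transforms on $\Phi_t(Y)$ coincide. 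By the same interchange of sup and integral as above,
\begin{align*}
V_C^*(\mu,g)=\int\sup_p\bigl(p(g)-C(x,p)\bigr)\mu(dx)=\int\mathcal T^*_{\delta_x}(g)\,\mu(dx);
\end{align*}
applying the defining identity \eqref{eq_transfers_defi} with the special choice $\mu=\delta_x$ gives $\mathcal T^*_{\delta_x}(g)=T(g)(x)$, hence $V_C^*(\mu,g)=\mu(T(g))=\mathcal T^*_\mu(g)$. Fenchel biduality then yields $V_C(\mu,\nu)=\mathcal T(\mu,\nu)$.

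The main obstacle I anticipate is not conceptual but technical: justifying the measurable-selection / interchange of sup and integral in the identification of $\mathcal T^*_\mu$ (and $V_C^*_\mu$), and verifying that the candidate $T(g)$ has the growth and measurability required in Definition~\ref{def:backward transfer}, in particular that $\mu(T(g))$ is a well-defined real number for each admissible $\mu$. This is precisely where the compact setting of \cite{BoGh19} is replaced by the $\Phi_t/\mathcal P_t$ framework of Section~\ref{se:framework}, and where the observation after \eqref{eq:duality} that one may restrict to $g\in\Phi_t(Y)$ bounded from below will be used to tame the suprema defining $T(g)$.
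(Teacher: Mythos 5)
Your argument coincides with the paper's in both directions: each pivots on identifying the Legendre transforms through the relation $T(g)(x)=\mathcal T^*_{\delta_x}(g)=-R_C(-g)(x)$ (obtained by applying \eqref{eq_transfers_defi} to $\mu=\delta_x$, which is where \eqref{eq:dirac in support} enters), then invoking Fenchel biduality together with the weak transport duality of Theorem~\ref{thm:duality}. The sup/integral interchange you flag as the technical obstacle is exactly the content of \eqref{eq:duality convex conjugate}, which the paper cites directly rather than re-deriving via measurable selection.
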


\begin{proof}
	Evidently, if $\mathcal T$ is given as a backward linear transfer, the cost function $C$ has to satisfy $C(x,p) = \mathcal T(\delta_x,p)$ and $T$ satisfies on $\Phi_t(Y)$
	$$T(g)(x) = \mathcal T^*_{\delta_x}g = \sup_{p \in \mathcal P_t(Y)}  p(g) - C(x,p) = -R_C(-g)(x).$$	
	Hence by \eqref{eq_transfers_defi} and \weakoptimal transport duality (Theorem~\ref{thm:duality}) we have
	\begin{align*}
		\mathcal T(\mu,\nu) &= \mathcal T_\mu(\nu) = \sup_{g \in \Phi_{t}(Y)} \nu(g) - \mathcal T_\mu^*(g) \\ &=  \sup_{g \in \Phi_{t}(Y)} \nu(g) - \int_X R_C(-g)(x) \mu(dx) \\ &= \sup_{g \in \Phi_t} -\nu(g) + \int_X R_C g(x) \mu(dx) \\ &= \inf_{\pi \in \Pi(\mu,\nu)} \int_X C(x,\pi_x)\mu(dx).
	\end{align*}
Conversely, if \ref{it:ghoussoub2} holds, then Theorem~\ref{thm:duality} reveals $$\mathcal T_\mu(\nu)= \sup_{g \in \Phi_{t}(Y)} \nu(g) - \mu( T(g)),$$
for $T(g)(x)=R_C(-g)(x)$, and by \eqref{eq:duality convex conjugate} we have $\mu(T(g))=\mathcal T_\mu^*(g)$.
\end{proof}	

\section{Semimartingale Transport Duality}
\label{se:TaTo}

In this part we need to set up some terminology before stating the actual problem. Let $$\C=C([0,1];\R^d)$$ denote the continuous path space equipped with the supremum norm $\|\cdot\|_\infty$ and its Borel $\sigma$-field. With $$W=(W(t))_{t \in [0,1]}$$ we denote the canonical (coordinate) process on $\C$, defined by $W(t)(\omega)=\omega(t)$, so that $W$ is a standard $d$-dimensional Brownian motion under the Wiener measure $\W$. Let $\FF=(\F_t)_{t \in [0,1]}$ denote the $\W$-complete filtration generated by $W$.
As usual, we denote by $L^0(\W)$ the space of (real-valued) random variables quotiented with the $\W$-a.s.\  identification, and by $L^\infty(\W)$ the essentially bounded elements of $L^0(\W)$. We will likewise identify processes that are $dt\times d\W$-almost surely equal. Finally, we denote by $\SS_+^d$ the set of symmetric positive semi-definite matrices of size $d\times d$. We fix from now on a matrix norm on $\R^{d\times d}$.\\

We consider $$g : [0,1] \times \R^d \times \R^d \times \SS_+^d  \rightarrow \R \cup\{\infty\}, $$ 
 and assume 
 \begin{assumption}\label{ass:TT}
$ $

\begin{enumerate}
\item $g$ is jointly measurable and lower-bounded.
\item For each $t\in [0,1]$ the function $$\R^d \times\R^d \times \SS_+^d  \ni(q,a)\mapsto g(t,x,q,a),$$
is jointly lower semicontinuous. Furthermore $$(q,a)\mapsto g(t,x,q,a)$$ is convex for each fixed $(t,x)$.
\item Either $g$ is finite and coercive in the sense that
\begin{align}\lim_{{|q|\vee |a|}\to \infty}\,\inf_{t,x}\frac{g(t,x,q,a)}{|q|+|a|}=+\infty,\label{eq:coer_g}
\end{align}
or  $$\mathrm{dom}(g(t,x,\cdot,\cdot)):=  \{(q,a) \in \R^d\times\SS_+^d\, :\, g(t,x,q,a) < \infty \}$$ is a compact convex set which does not depend of $(t,x)$.
\end{enumerate}
\end{assumption}
\medskip

For $Q\in\mathcal P(\C)$ we denote by $$\M^{ac}_0(Q)$$ the space of continuous $\R^d$-valued $Q$-martingales, which are started at zero, whose quadratic variation matrix is absolutely continuous and integrable: Namely $M\in \M^{ac}_0(Q) $ iff it is a $Q$-martingale started at zero, $\frac{d\langle M \rangle_t }{dt}$ exists $Q$-a.s.\  and $$\E^Q[|\langle M \rangle(1)|]<\infty.$$ This last condition is equivalent to asking $$\E^Q\left [ \int_0^1 \left| \frac{d\langle M \rangle(t) }{dt}  \right |dt \right ]<\infty.$$ On the other hand we write $\L^1(Q)$ for the set of progressively measurable $\R^d$-valued processes which are integrable with respect to $dt\times dQ$.

 We can now introduce the set of semimartingale laws relevant to our work:
$$
\S:=\Big\{ Q\in \mathcal P(\C):\,
\begin{array}{c}
 W(\cdot)=\int_0^\cdot q^Q(s)ds + M^Q(\cdot)\text{ under $Q$}, \\
 \text{ for some $M^Q\in \M^{ac}_0(Q),\, q^Q \in \L^1(Q)$} 
\end{array} 
\Big\}
$$ 
We remark that for $Q\in \S$ the process $q^Q$ above is uniquely determined. Likewise, the $\SS_+^d$-valued process $$a^Q:=\frac{d\langle M^Q\rangle_t}{dt}$$ is uniquely determined. \\

Let
\begin{align*}
\alpha^g :{\S}\to \R\cup\{+\infty\},
\end{align*}
be given by
\begin{equation}\label{eq def alpha rho}
	\alpha^g(Q):= \E^Q\left[\int_0^1g(t,W(t),q^Q(t),a^Q(t))\,dt\right].
\end{equation}
Note that $\alpha^g(Q)$ is well-defined and takes values in $ \R\cup\{+\infty\}$, as $g$ is bounded from below. As a final bit of notation, we introduce $\Pi(\mu,\nu)$ for the set of those $Q\in S$ with initial and final marginals equal to $\mu$ and $\nu$ respectively. We further write $\Pi(\mu,\cdot)$ when only the inital marginal is prescribed.

For $\mu,\nu\in\mathcal P(\mathbb R^d)$ we define\footnote{Equivalently, one may minimize the functional $\E[\int_0^1g(t,X(t),q(t),\sigma(t)\sigma'(t))dt]$ over all semimartingales $dX(t)=q(t) dt+\sigma(t) dB(t)$ on some stochastic basis, such that $X(0)\sim\mu,X(1)\sim\nu$. }
$$V(\mu,\nu):=\inf_{Q\in \Pi(\mu,\nu)}\alpha^g(Q).$$
This is a stochastic mass transport problem (or optimal transport of semimartingales) as introduced by Tan and Touzi \cite{TaTo13}. {Specification of $\alpha^g$ allow to cover classical optimal transport, martingale transport, and some instances of the Schr\"odinger problem in this framework.} We now prove a duality result, originally obtained by the aforementioned authors, by means of \weakoptimal transport:
\begin{theorem}
Under the standing assumptions, we have
$$V(\mu,\nu)=\sup_{\psi\in C_b(\mathbb R^d)}\left\{ \mu(\tilde\Psi)-\nu(\psi)  \right\},$$
where $\tilde{\psi}(x):=\inf_{Q\in \Pi(\delta_x,\cdot)}\E^Q\left[\int_0^1g(t,W(t),q^Q(t),a^Q(t))\,dt + \Psi(X_1)\right]$. 
\end{theorem}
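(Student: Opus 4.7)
The plan is to recast the semimartingale transport problem as a weak transport problem and invoke the duality theorem of Section~\ref{se:framework}. Define the cost functional
\begin{align*}
C(x,p) := \inf\bigl\{\alpha^g(Q) : Q\in\S,\, W(0)\sim \delta_x\text{ under }Q,\ W(1)\sim p\text{ under }Q\bigr\},
\end{align*}
with the convention $\inf\emptyset=+\infty$. A straightforward disintegration of any admissible $Q\in\Pi(\mu,\nu)$ along $W(0)$, combined with a measurable selection of near-optimal bridges, gives
$$V(\mu,\nu)=\inf_{\pi\in\Pi(\mu,\nu)}\int_{\R^d} C(x,\pi_x)\,\mu(dx)=V_C(\mu,\nu).$$

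The heart of the argument is verifying property \probref{prop:A} for $C$. Boundedness from below is immediate from the bound on $g$. Convexity in the second argument follows by a mixing construction: given $Q_i$ with $W(1)\sim p_i$ starting from $\delta_x$, the mixture $\lambda Q_1+(1-\lambda)Q_2$ is again in $\S$ (with the appropriate drift and diffusion characteristics reconstructed from conditional expectations) and has the same integral cost since $\alpha^g$ is affine in $Q$. For lower semicontinuity jointly in $(x,p)\in\R^d\times\mathcal P_t(\R^d)$, I would argue as follows. Given a sequence $(x_n,p_n)\to(x,p)$ with $\liminf C(x_n,p_n)<\infty$, pick near-optimal $Q_n$. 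Under either clause of Assumption~\ref{ass:TT}.(3), the level sets $\{\alpha^g\le K\}$ are tight in $\mathcal P(\C)$: in the coercive case via the De~la~Vall\'ee-Poussin criterion applied to the drift and diffusion pathwise, in the compact-domain case directly since the set of admissible $Q$'s is tight. Passing to a weak limit $Q$ and using the joint lower semicontinuity of $g$ in $(q,a)$ together with Fatou gives $\alpha^g(Q)\le\liminf\alpha^g(Q_n)$, while the marginals of $Q$ are $\delta_x$ and $p$.

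Granted property \probref{prop:A}, apply the dual formulation~\eqref{eq:duality}:
$$V_C(\mu,\nu)=\sup_{\psi\in\Phi_t(\R^d)}\Bigl\{-\nu(\psi)+\int_{\R^d}R_C\psi(x)\,\mu(dx)\Bigr\},\qquad R_C\psi(x)=\inf_{p\in\mathcal P_t(\R^d)} p(\psi)+C(x,p).$$
By definition of $C$, the infimum defining $R_C\psi(x)$ can be rewritten by first choosing the target distribution $p$ and then the semimartingale law achieving it, yielding
$$R_C\psi(x)=\inf_{Q\in\Pi(\delta_x,\cdot)}\E^Q\Bigl[\int_0^1 g(t,W(t),q^Q(t),a^Q(t))\,dt+\psi(W(1))\Bigr]=\tilde\psi(x).$$

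The final step is to restrict the supremum from $\Phi_t(\R^d)$ to $C_b(\R^d)$. For $\psi\in\Phi_t(\R^d)$ bounded below (which is allowed by~\eqref{eq:duality}), approximate $\psi$ monotonically by $\psi_n=(\psi\wedge n)$ and then mollify to obtain continuous bounded approximants. Since $p\mapsto p(\psi_n)$ is monotone in $n$ and $C(x,\cdot)$ is independent of the approximation, monotone convergence yields $\tilde\psi_n(x)\nearrow\tilde\psi(x)$, and likewise $\nu(\psi_n)\to\nu(\psi)$ after an outer truncation controlled by the $t$-th moment of $\nu$. The step I expect to be most delicate is the lower semicontinuity of $C$: the interplay between weak convergence of laws on path space, lower semicontinuity of $g$ only in $(q,a)$, and the potentially unbounded/degenerate character of the drift--diffusion characteristics is precisely where Assumption~\ref{ass:TT}.(3) must do its work, and both cases should be treated separately.
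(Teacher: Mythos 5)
Your proposal follows the paper's approach essentially step for step: define $C(x,p)=V(\delta_x,p)$, establish property \probref{prop:A} via convexity/lower semicontinuity/coercivity of $\alpha^g$ (the paper factors this into Theorem~\ref{thm compact convex} and Lemma~\ref{le:H1-tightness}), reduce $V(\mu,\nu)$ to a weak transport problem by measurable selection, invoke \eqref{eq:duality}, and identify $R_C\psi$ with $\tilde\psi$ by interchanging the two infima. The one place where your write-up is appreciably sketchier than the paper is the lower semicontinuity/coercivity step: after extracting the weak limit $Q$ one must show $Q\in\S$ with the limiting $L^1$-weak limits of $(q^{Q_n},a^{Q_n})$ as its characteristics, and the lsc of $\alpha^g$ under weak $L^1$ convergence uses convexity of $g(t,x,\cdot,\cdot)$, not just pointwise lsc plus Fatou; the paper handles this carefully through Skorokhod representation and an optional-projection argument in Lemma~\ref{le:H1-tightness}.

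A small but genuine flaw appears in your final paragraph on restricting the dual class from $\Phi_t(\R^d)$ to $C_b(\R^d)$. The claim that $p(\psi_n)\nearrow p(\psi)$ pointwise in $p$ implies $\tilde\psi_n(x)\nearrow\tilde\psi(x)$ is not automatic: an infimum of an increasing family of functions need not converge to the infimum of the limit. What makes it true here is the coercivity of $C(x,\cdot)$ (inf-compact level sets in $\mathcal P_t$), which lets you extract a converging sequence of near-minimizers $p_n$ for $R_C\psi_n(x)$ and pass to the limit; you should invoke this explicitly. Note also that the paper's own proof does not explicitly carry out this $C_b$-vs-$\Phi_t$ reduction, so flagging it is a welcome addition---it just needs the coercivity argument to close.
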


\begin{proof}
Define $C(x,p):=V(\delta_x,p)$. 

Remark that if $Q_1\in \Pi(\delta_x,p_1)$ and $Q_2\in \Pi(\delta_x,p_2)$, then for $\alpha\in[0,1]$ we have $\alpha Q_1+(1-\alpha)Q_2\in \Pi(\delta_x,\alpha p_1+(1-\alpha)p_2) $. As we will see in Theorem \ref{thm compact convex}, the function $\alpha^g(\cdot)$ is convex. From these facts it follows that $C(x,\cdot)$ is convex for each $x$ fixed. Further, Theorem \ref{thm compact convex} also shows that $\alpha^g(\cdot)$ is coercive and lower semicontinuous, which then implies that $C(x,\cdot)$ is lower semicontinuous for each $x$ fixed.

A standard measurable selection argument shows that
$$V(\mu,\nu)=\inf_{\pi\in Cpl(\mu,\nu)}\int V(\delta_x,\pi^x)\mu(dx)=\inf_{\pi\in Cpl(\mu,\nu)}\int C(x,\pi^x)\mu(dx),$$
where we wrote $Cpl(\mu,\nu)$ for the set of measures in $\R^d\times\R^d$ with the given marginals. Applying the duality \eqref{eq:duality} we deduce
$$V(\mu,\nu)= \sup_\psi\left \{ \int R_C\psi(x)\mu(dx)-\nu(\psi)  \right \},$$
where
\begin{align*}
R_C\psi(x) &:= \inf_p\{p(\psi)+C(x,p)\}\\
&= \inf_{Q\in\Pi(\delta_x,\cdot)}\E^Q\left[\int_0^1g(t,W(t),q^Q(t),a^Q(t))\,dt + \Psi(X_1)\right]\\
&=\tilde\psi(x).
\end{align*}
\end{proof}

For the previous result we employed:

\begin{theorem}\label{thm compact convex}
The functional $\alpha^g$ is convex, lower semicontinuous with respect to weak convergence, and coercive in the sense that $\{Q:\alpha^g(Q)\leq c\}$ is weakly compact for each $c\in\R$.
\end{theorem}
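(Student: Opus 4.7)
The plan is to establish the three properties in turn: \emph{convexity} via a direct mixture/Jensen argument, \emph{coercivity} via the moment bound encoded in \eqref{eq:coer_g}, and \emph{lower semicontinuity} via Itô's formula paired with Legendre duality. The extension of $\alpha^g$ to all of $\mathcal P(\C)$ by $+\infty$ outside $\S$ is understood throughout.

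\emph{Convexity.} Given $Q_1,Q_2\in\S$ and $\lambda\in[0,1]$, I would use an enlarged-space construction: introduce a Bernoulli variable $\xi$ with $\P(\xi=1)=\lambda$ and form the joint law $\Pi$ so that the conditional law of $W$ given $\xi=i$ is $Q_i$. The $W$-marginal of $\Pi$ is $Q=\lambda Q_1+(1-\lambda)Q_2$, and a standard projection computation (filtering $\xi$ out of the canonical decomposition of $W$ under $\Pi$) identifies the $Q$-characteristics of $W$ as $q^Q(t)=\sum_i p_i(t)\,q^{Q_i}(t)$ and $a^Q(t)=\sum_i p_i(t)\,a^{Q_i}(t)$, where $p_i(t):=\Pi[\xi=i\mid \F^W_t]$ satisfy $p_1(t)+p_2(t)=1$. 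Applying Jensen's inequality pathwise to the convex map $(q,a)\mapsto g(t,W(t),q,a)$, integrating under $Q\otimes dt$, and collapsing the weights via the tower property yields $\alpha^g(Q)\leq \lambda\alpha^g(Q_1)+(1-\lambda)\alpha^g(Q_2)$.

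\emph{Coercivity.} Under the first alternative of Assumption~\ref{ass:TT}(3), \eqref{eq:coer_g} provides, for every $M>0$, a constant $K_M$ with $g(t,x,q,a)\geq M(|q|+|a|)-K_M$ pointwise. Any $Q$ with $\alpha^g(Q)\leq c$ therefore satisfies $\E^Q\!\left[\int_0^1(|q^Q(t)|+|a^Q(t)|)\,dt\right]\leq(c+K_M)/M$. In the decomposition $W(t)=\int_0^t q^Q\,ds+M^Q(t)$ this uniformly controls the total variation of the finite-variation part and the expected quadratic variation of $M^Q$; Burkholder--Davis--Gundy together with Aldous's tightness criterion then imply tightness of $\{Q:\alpha^g(Q)\leq c\}$ in $\mathcal P(\C)$. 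Coupled with the lower semicontinuity below, this yields the claimed weak compactness. The compact-domain alternative is handled identically and more easily, since $(q^Q,a^Q)$ is uniformly bounded.

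\emph{Lower semicontinuity.} This is the delicate step. I would use the Legendre representation $g(t,x,q,a)=\sup_{(p,b)}\!\bigl(p\cdot q+\tfrac12\mathrm{tr}(b\,a)-g^*(t,x,p,b)\bigr)$ combined with Itô's formula: for $f\in C_b^{1,2}([0,1]\times\R^d)$ and $Q\in\S$, $\E^Q\!\left[\int_0^1\!\bigl(\nabla f\cdot q^Q+\tfrac12\mathrm{tr}(D^2f\,a^Q)\bigr)dt\right]=\E^Q\!\left[f(1,W(1))-f(0,W(0))-\int_0^1\partial_t f\,dt\right]$, whose right-hand side is a bounded continuous functional of the path of $W$ and thus weakly continuous in $Q$. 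A Stroock--Varadhan/martingale-problem style approximation, combined with a measurable selection realizing arbitrary affine $(p,b)$-test integrands as localized sums of gradient/Hessian pieces and leveraging convexity of $g^*$, expresses $\alpha^g$ as a countable supremum of weakly continuous functionals (minus a weakly continuous $g^*$-correction), which is therefore weakly lower semicontinuous. The hard part is precisely this last reduction: the maps $Q\mapsto q^Q,a^Q$ are not themselves weakly continuous on $\mathcal P(\C)$, so the drift and diffusion must be reconstructed indirectly via Itô's formula tested against a sufficiently rich $C^{1,2}$-family, and one must verify that this family is dense enough in the dual of $g$ to recover $\alpha^g$ exactly rather than as a lower bound only.
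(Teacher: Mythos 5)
Your convexity step matches the paper's in all essentials: work on an enlarged space carrying the index variable, project the characteristics $(q_X,a_X)$ onto the filtration generated by the canonical process to identify $(q^Q,a^Q)$ for $Q=\lambda Q_0+(1-\lambda)Q_1$, then apply Jensen in the $(q,a)$-slot. Your coercivity step is also essentially the paper's (Lemma~\ref{le:H1-tightness}): \eqref{eq:coer_g} gives a uniform bound on $\E^Q\int(|q^Q|+|a^Q|)\,dt$ over $\Lambda_c$, from which Aldous-type equicontinuity follows separately for the finite-variation and martingale parts.

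Where you diverge is lower semicontinuity, and the gap you flag at the end is a genuine obstruction, not a loose end to be tidied. The family of pairs $\{(\nabla f,D^2f):f\in C^{1,2}_b\}$ is far from dense among vector-field/matrix-field pairs: $\nabla f$ is necessarily curl-free and $D^2f$ is its Jacobian, so the two components cannot be prescribed independently even locally, and taking ``localized sums of gradient/Hessian pieces'' preserves these constraints rather than relaxing them. Consequently, restricting the Legendre supremum $g=\sup_{(p,b)}\bigl(p\cdot q+\tfrac12\operatorname{tr}(b\,a)-g^*\bigr)$ to It\^o-realizable test pairs yields only a lower bound for $\alpha^g$; your argument would show that $\alpha^g$ \emph{dominates} a weakly lower semicontinuous functional, not that it \emph{equals} one.

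The paper proves lower semicontinuity directly rather than by dualization. Given $Q_n\to Q$ with $\sup_n\alpha^g(Q_n)\leq c$, Lemma~\ref{le:H1-tightness} uses Skorokhod representation together with a de la Vall\'ee Poussin argument driven by \eqref{eq:coer_g} to extract weak $L^1$-limits of $(q^{Q_n},a^{Q_n})$; joint convexity of $g$ in $(q,a)$ together with Fatou then gives semicontinuity of the integral functional along the coupled limit. The residual subtlety is that the limiting triple may live in a filtration larger than that generated by the limiting path, so one projects the limiting drift onto the canonical filtration (optional projection) and invokes convexity of $g$ in $q$ once more to absorb the discrepancy. If you want to repair your proof, this is the route to follow; the Legendre-dual representation you reach for is in this literature the tool for proving the \emph{duality} theorem (which the paper instead derives from weak transport duality), not primal semicontinuity of $\alpha^g$.
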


In order to prove this we need the following auxiliary result first:

\begin{lemma} \label{le:H1-tightness}
Suppose $(q_n)_n $ is a sequence of $L^1([0,1],dt;\mathbb R^d)$-valued random variables possibly defined in different probability spaces, and call $A_n(t) := \int_0^tq_n(s)ds$. In the same space where $q_n$ is defined we are given a further $\C$-valued random variable $M_n$ such that $M_n(0)=0$, $M_n$ is a martingale wrt.\ its completed filtration, and such that $a_n(t) := \frac{\langle M_n\rangle(t)}{dt} $ exists a.s. Finally assume the existence of $c > 0$ such that, for all $n$,
\begin{align}
\E \left[\int_0^tg(t,M_n(t)+A_n(t),q_n(t),a_n(t))dt \right ] \le c. \label{def:H1-tightness-bound}
\end{align}
Then there exist an $L^1([0,1],dt;\mathbb R^d)$-valued random variable $A$, a $\C$-valued random variable $M$, and subsequences $A_{n_k}$ and $M_{n_k}$ such that 
\begin{enumerate}
\item $A_{n_k}$ converges in law in $\C$ to $A$,
\item  $M_{n_k}$ converges in law in $\C$ to $M$,
\item $A(t) = \int_0^tq(s)ds$, some $L^1([0,1],dt;\mathbb R^d)$-valued random variable $q$,
\item $\langle M \rangle(t) = \int_0^t a dt$, some $\SS_+^d$-valued process $a$ with $\E\left[\int_0^1|a(t)|dt\right ]<\infty$
\item the following inequality holds:
\begin{align}
&\E\left [\int_0^tg(t,M(t)+A(t),q(t),a(t))dt \right ] \notag \\ \le & \liminf_{k\rightarrow\infty}\E\left[\int_0^tg(t,M_{n_k}(t)+A_{n_k}(t),q_{n_k}(t),a_{n_k}(t))dt \right ]\label{def:H1-tightness-liminf}
\end{align}
\end{enumerate}
In particular, the laws of $(A_n)$ and $(M_n)_n$ form tight sequences.
\end{lemma}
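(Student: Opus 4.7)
The plan is to split according to the two alternatives in Assumption \ref{ass:TT}(3). The compact-domain alternative is essentially automatic (everything is uniformly bounded pathwise), so the focus will be on the coercive alternative. The argument has four stages: integrability, tightness, identification of limits, and lower semicontinuity.

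\textbf{Integrability.} Using the coercivity \eqref{eq:coer_g}, for each $K>0$ there exists $R_K$ such that $g(t,x,q,a) \geq K(|q|+|a|) - KR_K$ for all $(t,x,q,a)$ (after absorbing the lower bound of $g$). Combined with \eqref{def:H1-tightness-bound} this gives $\sup_n \E[\int_0^1(|q_n|+|a_n|)\,dt] < \infty$. Moreover, setting $\psi(r) := \inf\{g(t,x,q,a)-g_- : |q|+|a| \geq r\}$, coercivity yields $\psi(r)/r \to \infty$, and de la Vall\'ee Poussin then delivers uniform integrability of $(q_n,a_n)$ with respect to $dt \otimes d\P_n$.

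\textbf{Tightness in $\C$.} For $M_n$, Burkholder--Davis--Gundy gives $\E[\sup_t|M_n(t)|^2] \leq C\,\E[\int_0^1 |a_n|\,dt]$ uniformly bounded, while uniform integrability of $a_n$ ensures $\E[\langle M_n\rangle(t+\delta)-\langle M_n\rangle(t)] \to 0$ uniformly in $n$ and $t\in[0,1-\delta]$ as $\delta \to 0$; Aldous's criterion then yields tightness. For $A_n$, the pointwise estimate
\[
\sup_{|t-s|\leq\delta}|A_n(t)-A_n(s)| \leq L\delta + \int_0^1 |q_n|\1_{\{|q_n|>L\}}\,du,
\]
combined with Markov's inequality and uniform integrability (choose $L$ large first, then $\delta$ small) produces equicontinuity in probability, hence tightness in $\C$.

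\textbf{Extraction and identification.} Prokhorov yields a subsequence along which $(A_{n_k},M_{n_k})$ converges in law in $\C\times\C$, and Skorokhod's representation realises this on a common probability space with almost sure convergence $A_{n_k}\to A$, $M_{n_k}\to M$ in $\C$. Since $\{q_n\}$ is $L^1$-bounded and uniformly integrable, Dunford--Pettis yields a further subsequence with $q_{n_k}\rightharpoonup q$ weakly in $L^1(dt\otimes d\P)$. Testing against $\1_{[0,t]}\1_A$ for Borel $A$, and using the resulting uniform integrability of $A_{n_k}(t)$ to justify passage to the limit, one obtains $A(t) = \int_0^t q\,du$ a.s.\ for each $t$, hence for all $t$ by continuity. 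The same argument extracts a weak limit $a$ for $(a_n)$; passing to the limit in the matrix-valued martingale $M_n M_n^\top - \int_0^\cdot a_n\,du$ (whose uniform integrability is assured) identifies $\langle M\rangle = \int_0^\cdot a\,du$.

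\textbf{Lower semicontinuity.} The limit inequality \eqref{def:H1-tightness-liminf} is the crux of the argument. The integrand $g(t,x,q,a)$ is a normal integrand, jointly lower semicontinuous in $(x,q,a)$ and convex in $(q,a)$. Almost sure uniform convergence of $X_{n_k}:=A_{n_k}+M_{n_k}$ to $X:=A+M$, together with weak $L^1$ convergence of $(q_{n_k},a_{n_k})$ to $(q,a)$, places us exactly within the scope of the Ioffe--Balder lower semicontinuity theorem for convex integrands, which yields \eqref{def:H1-tightness-liminf}. This harmonisation of strong pathwise convergence of the state variable with weak $L^1$ convergence of the derivatives under a convex functional is where the argument pivots, and it is the main technical obstacle.
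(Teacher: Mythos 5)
Your proposal follows essentially the same route as the paper's proof: coercivity plus the lower bound on $g$ give uniform $L^1$-bounds and, via de la Vall\'ee Poussin, uniform integrability of $(q_n,a_n)$; Aldous' criterion gives tightness of $(A_n)$ and $(M_n)$ in $\C$; Skorokhod's representation plus Dunford--Pettis yield almost sure $\C$-convergence together with weak $L^1$-convergence of the controls; the limits are identified by testing against bounded random variables; and the liminf inequality comes from joint lower semicontinuity combined with convexity of $g$ in $(q,a)$. The one place you do something genuinely different is the last step: you invoke the Ioffe--Balder theorem for normal convex integrands, while the paper argues more by hand, first establishing norm lower semicontinuity of the functional on $\C\times L^1\times L^1_d$ via Fatou and then upgrading to the mixed strong--weak topology using convexity in the last two arguments; these are the same fact, and citing Ioffe--Balder is arguably cleaner and more rigorous. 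A second minor difference is your use of BDG for $M_n$, where the paper uses Cauchy--Schwarz directly on increments in the same way as for $A_n$. One small omission: the paper takes care to show $(M_n)$ is precompact in $\mathcal W_1(\C)$ (via Doob's inequality) precisely so the limit $M$ is itself a martingale, which is needed for item (4) and for the identification of $\langle M\rangle$ from the pre-limit martingales $M_nM_n^\top - \int_0^\cdot a_n\,du$; you assert the uniform integrability needed there but should flag that it is also what makes $M$ a martingale in the first place.
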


{The following proofs follow very closely the arguments in \cite{BaLaTa18}. As a small technical improvement over \cite{TaTo13}, we observe that the coercivity condition \eqref{eq:classical kantorovich} assumed here, is weaker than the analogue in the cited paper.}

\begin{proof}[Proof of Lemma \ref{le:H1-tightness}]$ $

We first check tightness. {If the final part of Assumption \ref{ass:TT}(3) holds, this is trivial. Otherwise, by} \eqref{eq:coer_g}, for each $r > 0$ we may find $N > 0$ such that $g(t,x,q,a) \ge r|q|+r|a|$ whenever $|q|\vee |a| \ge N$. Moreover, there exists $b\ge0$ such that $g(t,x,q,a)\ge -b$. In particular, for all $(t,q)$ we have $|q|+|a| \le 2N + \frac 1r(g(t,x,q,a) + b)$.
Hence, for $0 \le s < t \le 1$, 
\begin{align*}
|A_n(t)-A_n(s)| &\le \int_s^t|q_n(u)|du \\& \le \frac{1}{r}\int_s^t (g(u,W(u),q_n(u),a_n(u)) + b) \,du + 2N(t-s) \\
	&\le \frac{1}{r}\int_0^1 g(u,W(u),q_n(u),a_n(u)) \,du + \frac{b}{r} + N(t-s).
\end{align*}
Hence, for any $\delta_n \downarrow 0$, \eqref{def:H1-tightness-bound} yields
\begin{align*}
\limsup_{n\rightarrow\infty} \sup_\tau \E|A_n(\tau+\delta_n) - A_n(\tau)| &\le \limsup_{n\rightarrow\infty}\left(\frac{c+b}{r} + N\delta_n \right) = \frac{c+b}{r},
\end{align*}
where the $\sup_\tau$ is over all stopping times with values in $[0,1-\delta_n]$.
As $r > 0$ was arbitrary, this shows that 
\begin{align*}
\limsup_{n\rightarrow\infty} \sup_\tau \E|A_n(\tau+\delta_n) - A_n(\tau)| = 0,
\end{align*}
and from Aldous' criterion for tightness  \cite[Theorem 16.11]{Kallenberg} we conclude that $(A_n)$ is tight. The Cauchy-Schwartz inequality and similar calculations allow to conclude that 
$$\E[|M_n(\tau+\delta)-M_n(\tau)|]\leq \sqrt{N\delta + \frac{c+b}{r}},$$
so as before $(M_n)$ is a tight sequence. 
Furthermore, $(M_n)$ is in fact precompact in the 1-Wasserstein space $\mathcal W_1(\C)$ of measures on $\C$ which integrate the supremum of the norm of the path of the canonical process. This follows by showing that
$$\lim_{K\to\infty} \sup_n \E\left[ \sup_{t\leq 1}|M_n(t)|1_{\sup_{t\leq 1}|M_n(t)|\geq K} \right]=0,$$
which is a consequence of Cauchy-Schwartz, Doob's inequality, and Assumption \ref{ass:TT}(3).

Passing to a subsequence and applying Skorokhod's representation, let us now assume that there exists continuous process $A$ and $M$ such that $A_n \rightarrow A$ and $M_n\rightarrow M$ almost surely (in $\C$), with all processes defined on some common probability space $(\Omega,\F,\PP)$. The process $M$ is in fact a martingale thanks to $\mathcal W_1(\C)$-precompactness. From Assumption \ref{ass:TT}(3) and a standard argument as in the de la Vall\'ee Poisson Theorem, we conclude that $\{q_n : n \in \N\} \subset L^1 := L^1([0,1] \times \Omega, \, dt \otimes d\PP)$ is uniformly integrable and thus weakly precompact. Similarly, using further that $\{a_n : n \in \N\}$ is weakly precompact in the Bochner space $L^1_d:=L^1([0,1] \times \Omega, \, dt \otimes d\PP;\R^{d\times d})$ of matrix-valued integrable processes if and only if  $\{|a_n| : n \in \N\}$ is uniformly integrable (cf.\ \cite{Diestel}), we deduce that $\{a_n : n \in \N\}$ is weakly precompact.  By passing to a further subsequence, we may now assume that $q_n \rightarrow q$ weakly in $L^1$, that $a_n\rightarrow a$ weakly in $L^1_d$, and that $a$ is almost surely $\SS_+^d$-valued. Because $g$ is bounded from below and lower semicontinuous in its last three variables, the map $(\bar X,\bar q,\bar a) \mapsto \E\int_0^1 g(t,\bar X(t),\bar q(t),\bar a(t))dt$ is lower semicontinuous in the norm topology of $\C\times L^1\times L^1_d$, by Fatou's lemma. Because it is also convex in the last two, this map is therefore weakly lower semicontinuous when $L^1\times L^1_d$ is given the weak topology. This yields \eqref{def:H1-tightness-liminf}.
By dominated convergence, it holds for each bounded random variable $Z$ that
\begin{align*}
\E[ZA(t)] = \lim_{n\rightarrow\infty}\E[ZA_n(t)] = \lim_{n\rightarrow\infty}\E\left[Z\int_0^tq_n(s)ds\right] = \E\left[Z\int_0^tq(s)ds\right].
\end{align*}
Hence $A(t) = \int_0^tq(s)ds$ a.s.\ for each $t$, and by continuity we have $A(\cdot) = \int_0^\cdot q(s)ds$ a.s.  A similar argument shows that for each $1\leq i,j\leq d$ and $0\leq s\leq t\leq 1$ and $Z$ measurable up to time $s$, we have
\begin{align*}0=&\lim_{n\rightarrow\infty}\E\left[ Z\left(M_n^iM_n^j(t)-M_n^iM_n^j(s)-\int_s^ta_n^{i,j}(r)dr\right)  \right ]\\=& \E\left[ Z\left(M^iM^j(t)-M^iM^j(s)-\int_s^ta^{i,j}(r)dr\right)  \right ] ,
\end{align*}
from which $\langle M \rangle(\cdot)=\int_0^\cdot a(r)dr$.
\end{proof}

\begin{proof}[Proof of Theorem \ref{thm compact convex}]$ $

\textbf{Convexity}: Let $\lambda\in [0,1]$, and fix $Q_0,Q_1\in \S$. We work on the extended probability space $\C \times \{0,1\}$, and we write $(W,X)$ to denote the identity map on this space. We define a measure $M$ on $\C \times \{0,1\}$ by requiring that the second marginal of $M$ be $\lambda \delta_0 + (1-\lambda)\delta_1$, and the conditional law of $W$ given $X$ be $Q_X$. In particular, the first marginal of $M$ is precisely $Q:=\lambda Q_0 + (1-\lambda)Q_1$.
Abbreviate $q_i:=q^{Q_i}$ and $a_i:=a^{Q_i}$.
It easily follows that the process
\[
W(t) - \int_0^t q_X(s)ds
\]
defines an $M$-martingale with respect to the filtration $\overline\FF=(\overline\F_t)_{t \in [0,1]}$ defined by $\overline\F_t=\F_t \otimes \sigma(X)$ on the product space. Furthermore, the quadratic variation of $W(\cdot) - \int_0^\cdot q_X(s)ds$ has a density explicitly given by $t\mapsto a_X(t)$.  Now define the processes $q=(q(t))_{t \in [0,1]}$ and $a=(a(t))_{t\in[0,1]}$ respectively as the optional projections of the processes $(q_X(t))_{t \in [0,1]}$ and $(a_X(t))_{t \in [0,1]}$ on the filtration $\mathcal F$ generated by $W$. In particular,
\begin{align*}
q(t) = \E^M[q_X(t) \, | \, (W_s)_{s \le t}] & = \E^M[ {\bf 1}_{X=0}q_0(t) + {\bf 1}_{X=1}q_1(t) \, | \, (W_s)_{s \le t}  ],\\
a(t) = \E^M[a_X(t) \, | \, (W_s)_{s \le t}] & = \E^M[ {\bf 1}_{X=0}a_0(t) + {\bf 1}_{X=1}a_1(t) \, | \, (W_s)_{s \le t}  ],
\end{align*}
A few computations reveal that $W(\cdot) - \int_0^\cdot q(t)dt$
is still an $M$-martingale. On the other hand, since $W(\cdot) - \int_0^\cdot q_X(s)ds$ has $a_X$ as the density of its quadratic variation under $M$, so does $W$ itself. But then for all $i\leq i,j\leq d$, if $R$ is bounded and ${\mathcal F}_t$-measurable and $0\leq h \leq 1-t$, we have
\begin{align*}
0&=\E^M\left [ \left ( W^iW^j(t+h)-W^iW^j(t)-\int_t^{t+h} a^{i,j}_X(s)ds \right ) R \right] \\ &= \E^M\left [ \left ( W^iW^j(t+h)-W^iW^j(t)-\int_t^{t+h} a^{i,j}(s)ds \right ) R \right] \\
&= \E^Q\left [ \left ( W^iW^j(t+h)-W^iW^j(t)-\int_t^{t+h} a^{i,j}(s)ds \right ) R \right],
\end{align*}
recalling that $Q$ is the first marginal of $M$. Since the $M$ martingale $W(\cdot) - \int_0^\cdot q(t)dt$ is $\mathcal F$-adapted, it follows that it is a $Q$-martingale (when seen as living in the filtered probability space $(\C,\FF,Q)$) and the above display shows that the density of its quadratic variation is precisely $a$.  In summary, we conclude that $Q \in \S$, and that $q=q^Q$ as well as $a=a^Q$. Finally, using Jensen's inequality, we compute
\begin{align*}
& \lambda\alpha^g(Q_0) + (1-\lambda)\alpha^g(Q_1) \\ = & \lambda \E^{Q_0}\left[\int_0^1 g(t,W(t),q_0(t),a_0(t))dt\right]+(1-\lambda)\E^{Q_1}\left[\int_0^1 g(t,W(t),q_1(t),a_1(t))dt\right] \\ 
	=& \E^M\left[\int_0^1 g(t, W(t),q_X(t),a_X(t))dt\right] \\ 
	\ge & \E^M\left[\int_0^1 g(t,W(t), q(t),a(t))dt\right] \\ =& \E^{Q}\left[\int_0^1 g(t,W(t),q(t),a(t))dt\right] \\
	=& \alpha^g(Q).
\end{align*}

{
\textbf{Inf-compactness:} 
Let $c\in \R$ and $\Lambda_c := \{Q:{\alpha}^g(Q)\leq c\}$. It is convenient in this step and the next to define
\[
W^Q(t) := W(t) - \int_0^t q^Q(s)ds, \quad t \in [0,1],
\]
for $Q \in \S$, noting that $W^Q$ is a $Q$-martingale with volatility $a^Q$. Letting $A^Q(t) := \int_0^tq^Q(s)ds$, it follows from Lemma \ref{le:H1-tightness} that $\{Q \circ (A^Q)^{-1} : Q \in \Lambda_c\} \subset \P(\C)$ is tight. On the other hand, $\{Q \circ (W^Q)^{-1} : Q \in \Lambda_c\} $ is tight as well by the same argument. Since each marginal is tight, we deduce that $\{Q \circ (W^Q,A^Q)^{-1} : Q \in \Lambda_c\} \subset \P(\C \times \C)$ is tight. Finally, by continuous mapping, the set $\{Q \circ (W^Q + A^Q)^{-1} : Q \in \Lambda_c\} = \Lambda_c$ is tight.
}

{
\textbf{Lower semicontinuity:} Suppose $\{Q_n : n \in \N \} \subset \Lambda_c$ with $Q_n \rightarrow Q$ weakly for some $Q \in \P(\C)$. We must show that $Q$ belongs to $\Lambda_c$. Define the continuous process
\[
A_n(t): = \int_0^t q^{Q_n}(s)ds = W(t) - W^{Q_n}(t),
\]
for each $n$. As in the previous point, $\{Q_n \circ (W,W^{Q_n},A^n)^{-1} : n \in \N\}$ is tight. 
Relabelling a subsequence, suppose that $Q_n \circ (W,W^{Q_n},A^n)^{-1}$ converges weakly to the law of some $\C^3$-valued random variable $(X,B,A)$.
Using Lemma \ref{le:H1-tightness}, we may assume also that $A(t) = \int_0^tq(s)ds$ and $a(t):=\frac{\langle B\rangle(t)}{dt}$  satisfying
\[
\E\int_0^1g(t,X(t),q(t),a(t))dt \le \liminf \E^{Q_n}\int_0^1g(t,W(t),q^{Q_n}(t),a^{Q_n}(t))dt \le c.
\]
Clearly $W^{Q_n}$ is a martingale in the filtration of $(W,W^{Q_n},A^n)$, and hence $B$ is a martingale in the filtration of $(X,B,A)$. Finally, notice that
\[
X(t) = B(t) + A(t) = B(t) + \int_0^tq(s)ds,
\]
as the same relation holds in the pre-limit.
A standard argument shows that $X - \int_0^{\cdot}\widehat{q}(s)ds$ is a martingale with respect to the filtration of $X$, where $\widehat{q}$ is the optional projection of $q$ onto such filtration. Writing 
$$X(t)=B(t) + \int_0^t[q(s)-\widehat{q}(s)]ds+\int_0^{t}\widehat{q}(s)ds=: \tilde B(t)+\int_0^{t}\widehat{q}(s)ds,$$
we deduce that $\tilde B$ is an $X$-adapted martingale with density of quadratic variation $a$.
By convexity of $g(t,x,\cdot,\cdot)$, we have
\[
\E\int_0^1g(t,X(t),\widehat{q}(t),a(t))dt \le \E\int_0^1g(t,X(t),q(t),a(t))dt \le c.
\]
Recalling that $Q$ denoted the law of $X$, we conclude that $Q \in \Lambda_c$.
}

\end{proof}

\bibliography{joint_biblio}

\begin{thebibliography}{10}

\bibitem{AlCoJo17}
A.~{Alfonsi}, J.~{Corbetta}, and B.~{Jourdain}.
\newblock {Sampling of probability measures in the convex order and
  approximation of Martingale Optimal Transport problems}.
\newblock {\em ArXiv e-prints}, Sept. 2017.

\bibitem{AlBoCh18}
J.-J. Alibert, G.~Bouchitte, and T.~Champion.
\newblock A new class of cost for optimal transport planning.
\newblock {\em hal-preprint}, 2018.

\bibitem{BaBaBeEd19a}
J.~{Backhoff-Veraguas}, D.~{Bartl}, M.~{Beiglb{\"o}ck}, and M.~{Eder}.
\newblock {Adapted Wasserstein Distances and Stability in Mathematical
  Finance}.
\newblock {\em arXiv e-prints}, 2019.

\bibitem{BaBeHuKa17}
J.~{Backhoff-Veraguas}, M.~{Beiglb{\"o}ck}, M.~{Huesmann}, and
  S.~{K{\"a}llblad}.
\newblock {Martingale Benamou--Brenier: a probabilistic perspective}.
\newblock {\em {Ann. Probab.}, to appear}, 2020.

\bibitem{BaBeLiZa16}
J.~Backhoff-Veraguas, M.~Beiglb{\"o}ck, Y.~Lin, and A.~Zalashko.
\newblock Causal transport in discrete time and applications.
\newblock {\em SIAM Journal on Optimization}, 27(4):2528--2562, 2017.

\bibitem{BaBePa18}
J.~Backhoff-Veraguas, M.~Beiglb{\"o}ck, and G.~Pammer.
\newblock Existence, duality, and cyclical monotonicity for weak transport
  costs.
\newblock {\em Calculus of Variations and Partial Differential Equations},
  58(6):203, 2019.

\bibitem{BaBePa19}
J.~{Backhoff-Veraguas}, M.~{Beiglb{\"o}ck}, and G.~{Pammer}.
\newblock Weak monotone rearrangement on the line.
\newblock {\em Electronic Communications in Probability}, 25, 2020.

\bibitem{BaLaTa18}
J.~{Backhoff-Veraguas}, D.~{Lacker}, and L.~{Tangpi}.
\newblock {Non-exponential Sanov and Schilder theorems on Wiener space: BSDEs,
  Schr\"{o}dinger problems and Control}.
\newblock {\em Forthcoming at Annals of Applied Probability}, 2018.

\bibitem{BaPa19}
J.~{Backhoff-Veraguas} and G.~{Pammer}.
\newblock {Stability of martingale optimal transport and weak optimal
  transport}.
\newblock {\em arXiv e-prints}, 2019.

\bibitem{Be15}
M.~Beiglb\"{o}ck.
\newblock Cyclical monotonicity and the ergodic theorem.
\newblock {\em Ergodic Theory Dynam. Systems}, 35(3):710--713, 2015.

\bibitem{BeCoHu14}
M.~{Beiglb{\"o}ck}, A.~Cox, and M.~Huesmann.
\newblock Optimal transport and {S}korokhod embedding.
\newblock {\em Invent. Math.}, 208(2):327--400, 2017.

\bibitem{BeGr14}
M.~{Beiglb{\"o}ck} and C.~{Griessler}.
\newblock A land of monotone plenty.
\newblock {\em Annali della SNS, to appear}, Apr. 2016.

\bibitem{PHB}
M.~Beiglb{\"o}ck, P.~Henry-Labord{\`e}re, and F.~Penkner.
\newblock Model-independent bounds for option prices---a mass transport
  approach.
\newblock {\em Finance Stoch.}, 17(3):477--501, 2013.

\bibitem{BeJu16}
M.~Beiglb{\"o}ck, N.~Juillet, et~al.
\newblock On a problem of optimal transport under marginal martingale
  constraints.
\newblock {\em The Annals of Probability}, 44(1):42--106, 2016.

\bibitem{BeJu17}
M.~{Beiglboeck} and N.~{Juillet}.
\newblock {Shadow couplings}.
\newblock {\em ArXiv e-prints}, Sept. 2016.

\bibitem{BeCaCuPe15}
J.-D. Benamou, G.~Carlier, M.~Cuturi, L.~Nenna, and G.~Peyr\'{e}.
\newblock Iterative {B}regman projections for regularized transportation
  problems.
\newblock {\em SIAM J. Sci. Comput.}, 37(2):A1111--A1138, 2015.

\bibitem{BiCa10}
S.~Bianchini and L.~Caravenna.
\newblock On optimality of {$c$}-cyclically monotone transference plans.
\newblock {\em C. R. Math. Acad. Sci. Paris}, 348(11-12):613--618, 2010.

\bibitem{BoGh19}
M.~Bowles and N.~Ghoussoub.
\newblock Mather measures and ergodic properties of kantorovich operators
  associated to general mass transfers.
\newblock 2019.

\bibitem{Br87}
Y.~Brenier.
\newblock D\'ecomposition polaire et r\'earrangement monotone des champs de
  vecteurs.
\newblock {\em C. R. Acad. Sci. Paris S\'er. I Math.}, 305(19):805--808, 1987.

\bibitem{Br91}
Y.~Brenier.
\newblock Polar factorization and monotone rearrangement of vector-valued
  functions.
\newblock {\em Comm. Pure Appl. Math.}, 44(4):375--417, 1991.

\bibitem{CoDeDi15}
M.~Colombo, L.~De~Pascale, and S.~Di~Marino.
\newblock Multimarginal optimal transport maps for one-dimensional repulsive
  costs.
\newblock {\em Canad. J. Math.}, 67(2):350--368, 2015.

\bibitem{CoVi19}
A.~M. Cox and M.~Vidmar.
\newblock The structure of non-linear martingale optimal transport problems.
\newblock {\em arXiv preprint arXiv:1903.06606}, 2019.

\bibitem{Cs75}
I.~Csisz{\'a}r.
\newblock I-divergence geometry of probability distributions and minimization
  problems.
\newblock {\em The Annals of Probability}, pages 146--158, 1975.

\bibitem{Cu13}
M.~Cuturi.
\newblock Sinkhorn distances: Lightspeed computation of optimal transport.
\newblock In {\em Advances in neural information processing systems}, pages
  2292--2300, 2013.

\bibitem{DaDeTz17}
C.~{Daskalakis}, A.~{Deckelbaum}, and C.~{Tzamos}.
\newblock {Strong Duality for a Multiple-Good Monopolist}.
\newblock {\em Econometrica}, 85(3):735--767, 2017.

\bibitem{Diestel}
J.~Diestel.
\newblock Uniform integrability: an introduction.
\newblock 1991.

\bibitem{KuEc19}
S.~Eckstein and M.~Kupper.
\newblock Computation of optimal transport and related hedging problems via
  penalization and neural networks.
\newblock {\em Applied Mathematics \& Optimization}, pages 1--29, 2019.

\bibitem{FaGoPr19}
M.~Fathi, N.~Gozlan, and M.~Prodhomme.
\newblock A proof of the caffarelli contraction theorem via entropic
  regularization.
\newblock {\em arXiv preprint arXiv:1904.06053}, 2019.

\bibitem{FaSh18}
M.~Fathi and Y.~Shu.
\newblock Curvature and transport inequalities for {M}arkov chains in discrete
  spaces.
\newblock {\em Bernoulli}, 24(1):672--698, 2018.

\bibitem{GaLaTo}
A.~Galichon, P.~Henry-Labord{\`e}re, and N.~Touzi.
\newblock A stochastic control approach to no-arbitrage bounds given marginals,
  with an application to lookback options.
\newblock {\em Ann. Appl. Probab.}, 24(1):312--336, 2014.

\bibitem{GaMc96}
W.~Gangbo and R.~McCann.
\newblock The geometry of optimal transportation.
\newblock {\em Acta Math.}, 177(2):113--161, 1996.

\bibitem{GoJu18}
N.~Gozlan and N.~Juillet.
\newblock On a mixture of brenier and strassen theorems.
\newblock {\em arXiv preprint arXiv:1808.02681}, 2018.

\bibitem{GoRoSaSh18}
N.~Gozlan, C.~Roberto, P.-M. Samson, Y.~Shu, and P.~Tetali.
\newblock Characterization of a class of weak transport-entropy inequalities on
  the line.
\newblock {\em Ann. Inst. Henri Poincar\'e Probab. Stat.}, 54(3):1667--1693,
  2018.

\bibitem{GoRoSaTe17}
N.~Gozlan, C.~Roberto, P.-M. Samson, and P.~Tetali.
\newblock Kantorovich duality for general transport costs and applications.
\newblock {\em J. Funct. Anal.}, 273(11):3327--3405, 2017.

\bibitem{Gr16a}
C.~{Griessler}.
\newblock {$c$-cyclical monotonicity as a sufficient criterion for optimality
  in the multi-marginal Monge-Kantorovich problem}.
\newblock {\em ArXiv e-prints}, Jan. 2016.

\bibitem{GuMeNu17}
J.~Guyon, R.~Menegaux, and M.~Nutz.
\newblock Bounds for vix futures given s\&p 500 smiles.
\newblock {\em Finance and Stochastics}, 21(3):593--630, 2017.

\bibitem{Kallenberg}
O.~Kallenberg.
\newblock {\em Foundations of modern probability}.
\newblock Springer Science \& Business Media, 2006.

\bibitem{Le14}
C.~L\'eonard.
\newblock A survey of the {S}chr\"odinger problem and some of its connections
  with optimal transport.
\newblock {\em Discrete Contin. Dyn. Syst.}, 34(4):1533--1574, 2014.

\bibitem{Ma96contracting}
K.~Marton.
\newblock A measure concentration inequality for contracting markov chains.
\newblock {\em Geometric \& Functional Analysis GAFA}, 6(3):556--571, 1996.

\bibitem{Ma96concentration}
K.~Marton et~al.
\newblock Bounding $\bar d$-distance by informational divergence: A method to
  prove measure concentration.
\newblock {\em The Annals of Probability}, 24(2):857--866, 1996.

\bibitem{MiTh06}
T.~Mikami and M.~Thieullen.
\newblock Duality theorem for the stochastic optimal control problem.
\newblock {\em Stochastic Process. Appl.}, 116(12):1815--1835, 2006.

\bibitem{NuSt16}
M.~{Nutz} and F.~{Stebegg}.
\newblock {Canonical Supermartingale Couplings}.
\newblock {\em Ann. Probab., to appear}, Sept. 2018.

\bibitem{Pa12fm}
B.~Pass.
\newblock On the local structure of optimal measures in the multi-marginal
  optimal transportation problem.
\newblock {\em Calc. Var. Partial Differential Equations}, 43(3-4):529--536,
  2012.

\bibitem{Sa17}
P.-M. Samson.
\newblock Transport-entropy inequalities on locally acting groups of
  permutations.
\newblock {\em Electron. J. Probab.}, 22:Paper No. 62, 33, 2017.

\bibitem{Sh16}
Y.~Shu.
\newblock From hopf-lax formula to optimal weak transfer plan.
\newblock {\em arXiv preprint arXiv:1609.03405}, 2016.

\bibitem{Sh18}
Y.~Shu.
\newblock Hamilton-{J}acobi equations on graph and applications.
\newblock {\em Potential Anal.}, 48(2):125--157, 2018.

\bibitem{St65}
V.~Strassen.
\newblock The existence of probability measures with given marginals.
\newblock {\em Ann. Math. Statist.}, 36:423--439, 1965.

\bibitem{Ta95}
M.~Talagrand.
\newblock Concentration of measure and isoperimetric inequalities in product
  spaces.
\newblock {\em Publications Math{\'e}matiques de l'Institut des Hautes Etudes
  Scientifiques}, 81(1):73--205, 1995.

\bibitem{Ta96}
M.~Talagrand.
\newblock New concentration inequalities in product spaces.
\newblock {\em Inventiones mathematicae}, 126(3):505--563, 1996.

\bibitem{TaTo13}
X.~Tan and N.~Touzi.
\newblock Optimal transportation under controlled stochastic dynamics.
\newblock {\em Ann. Probab.}, 41(5):3201--3240, 2013.

\bibitem{Vi03}
C.~Villani.
\newblock {\em Topics in optimal transportation}, volume~58 of {\em Graduate
  Studies in Mathematics}.
\newblock American Mathematical Society, Providence, RI, 2003.

\bibitem{Vi09}
C.~Villani.
\newblock {\em Optimal Transport. Old and New}, volume 338 of {\em Grundlehren
  der mathematischen Wissenschaften}.
\newblock Springer, 2009.

\bibitem{Za14}
D.~{Zaev}.
\newblock {On the Monge-Kantorovich problem with additional linear
  constraints}.
\newblock {\em Mathematical Notes}, 98(5-6):725--741, 2015.

\bibitem{Za02}
C.~Zalinescu.
\newblock {\em Convex analysis in general vector spaces}.
\newblock World scientific, 2002.

\bibitem{Zh18}
X.~Zhou.
\newblock On the fenchel duality between strong convexity and lipschitz
  continuous gradient.
\newblock {\em arXiv preprint arXiv:1803.06573}, 2018.

\end{thebibliography}
\bibliographystyle{abbrv}
\end{document}